\theoremstyle{plain}
\newtheorem{thm}{Theorem}[section]
\newtheorem{lem}[thm]{Lemma}
\newtheorem{prp}[thm]{Proposition}
\theoremstyle{definition}
\numberwithin{equation}{section}
\newtheorem{defn}{Definition}[section]
\newtheorem{rem}{Remark}[section]
\newtheorem{Assumptions}{Assumptions}
\newtheorem{Example}{Example}
\numberwithin{Example}{section}
\newcommand{\scr}[1]{\mathscr #1}
\definecolor{wco}{rgb}{0.5,0.2,0.3}
\title{{\bf Distribution-Path Dependent Nonlinear SPDEs with Application to Stochastic Transport Type Equations}\footnote{Feng-Yu Wang is supported in
 part by  NNSFC (11831014, 11921001) and the National Key R \& D Program of China (No.2020YFA0712900). Panpan Ren and Hao Tang acknowledge support by the Alexander von Humboldt Foundation.} }
\author{{\bf   Panpan Ren$^{d)}$, Hao Tang$^{c)}$, Feng-Yu Wang$^{a,b)}$  }\\
\footnotesize{$^{a)}$ Center for Applied Mathematics, Tianjin University, Tianjin 300072, China}\\
 \footnotesize{$^{b)}$ Department of Mathematics,
Swansea University, Bay Campus, Swansea SA1 8EN, United Kingdom}\\
\footnotesize{$^{c)}$ Department of Mathematics, University of Oslo, P.O. Box 1053 Blindern, N-0316 Oslo, Norway}\\
\footnotesize{$^{d)}$ Department of Mathematics, Technical University of Kaiserslautern, P.O. Box 3049  Kaiserslautern,  Germany}\\
\footnotesize{  wangfy@tju.edu.cn, F.-Y.Wang@swansea.ac.uk, haot@math.uio.no, rppzoe@gmail.com }}
\def\R{\mathbb R}  \def\ff{\frac}   
\def\ee{\varepsilon}
\def\dd{\delta} \def\DD{\Delta} \def\vv{\varepsilon} 
\def\<{\langle} \def\>{\rangle}  \def\gg{\gamma}
  \def\nn{\nabla} \def\pp{\partial} \def\E{\mathbb E}
\def\d{\text{\rm{d}}} \def\bb{\beta} \def\aa{\alpha} \def\D{\scr D}
  \def\si{\sigma} 
\def\F{\scr F}
\def\Lhs{\mathcal L}
\def\e{\text{\rm{e}}}  \def\OO{\Omega}  
 \def\tt{\tilde}\def\[{\lfloor} \def\]{\rfloor}
 \def\P{\mathbb P} 
\def\C{\scr C}           
\def\Z{\mathbb Z}  \def\ll{\lambda}
\def\L{\scr L} 
\def\i{{\rm in}}  \def\H{\mathbb H}
\def\M{\mathbb M}  \def\LL{\Lambda}
  \def\i{{\rm i}} 
\def\to{\rightarrow}\def\U{\mathbb U}
\def\W{\mathbb W}
\def\BB{\mathbb B} \def\i{{\rm i}}
\def\1{{\bf 1}}
\def\ss{\sqrt}
\def\beg{\begin} \def\beq{\beg{equation}}
\begin{document}

\maketitle
\abstract{By using a  regularity approximation argument, the global existence and uniqueness are derived for a class of nonlinear SPDEs depending on both the whole history and the distribution under strong enough noise. As applications, the global existence and uniqueness are proved for distribution-path dependent stochastic transport type equations, which are  arising from stochastic fluid mechanics with forces depending on the history and the environment. In particular,  the distribution-path dependent stochastic Camassa--Holm equation with or without Coriolis effect has a unique global solution when the noise is strong enough,  whereas  for the deterministic model   wave-breaking may occur.   This  indicates that
the noise   may prevent blow-up almost surely.  }

 \

 {\textbf{Keywords}}:  Distribution-Path Dependent Nonlinear SPDEs;  Stochastic transport type equation; Stochastic Camassa--Holm type equation.

 \textbf{MSC[2020]}:  Primary: 60H15, 35Q35, Secondary:  60H50, 35A01.

\section{Introduction}\label{Section:Intro}

To describe the evolutions of stochastic systems depending on the history and micro environment, distribution-path dependent SDEs of the following type
\begin{equation}
	\label{DD} \d X(t) =  b(t,X_t,\L_{X_t})\d t + \si(t,X_t,\L_{X_t}) \d W(t),\ \ X(0)=X_0\in\R^d,\ \  t\in [0,T]
\end{equation}
have been studied intensively investigated, see for instance \cite{BRW20,HRW19, RW19, RW20, W18}  and   references therein. However, the existing study in the literature does not cover distribution-path dependent  nonlinear SPDEs containing  a singular term which is not well-defined on the state space.  The main purpose of this paper is to solve a class of such SPDEs including transport type fluid models.

Nowadays there exists an abundant amount of literature concerning the stochastic fluid models under random perturbation which we do not attempt to survey here, and we recommend  the lecture notes \cite{D-13-notes,  F-10-notes} and the monographs \cite{BFH-18-book, KS-12-book} for readers' references. On one hand, in the real world, it is  natural that the  random perturbation may rely on both the sample path due to inertia,  and  averaged  stochastic interactions from the environment. On the other hand,  to the best of our knowledge, almost nothing is known if the randomness in the stochastic fluid models also depends on the distribution and the path of unknown variables, i.e.,  distribution-path dependent stochastic fluid models. For such problems, the fundamental question on the well-posedness (even merely the existence) of solutions remains open.    Particularly, although the (distribution-path independent) stochastic transport equations have been intensively investigated  (see for example \cite{FF-13-JFA,FNO-18, FGP-10-Inven,MO-17-AMPA,MO-17-BBMS,NO-15-NODEA}),  there is no any study on distribution-path dependent stochastic transport type equations.

To study distribution-path dependent stochastic fluid models,  we may need extend \eqref{DD} to infinite dimensional case, i.e., assuming that $X$ takes value in a separable Hilbert space
$\H$. If this is the case, a singular term, which is not well-defined on $\H$, may occur and the existing study in the literature does not cover this case. More precisely, we consider the case that \eqref{DD} contains one more singular drift term $B$ taking value in a larger separable Hilbert space $\BB$ such that $\H\hookrightarrow\BB$, i.e.,
\begin{equation}
	\label{EE} \d X(t) = \left\{B(t, X(t))+ b(t,X_t,\L_{X_t})\right\}\d t + \si(t,X_t,\L_{X_t}) \d W(t),\ \
X(0)=X_0\in\H,	\ \  t\in [0,T].
\end{equation}
Indeed, when we  consider certain stochastic fluid models in Sobolev spaces $\H=H^s$, if $B(t, X(t))$ involves $\nabla X$ or some derivatives of $X$ (see Examples \ref{Exm Abs} and \ref{Exm SRCH}), then $B(t, X(t))$  may not be expected to be in $\H=H^s$. 
Particularly, when $B(t,X(t))=-(X(t)\cdot\nn)X(t)$, \eqref{EE} reduces to the following transport type equaionts 
\begin{equation}
	\label{TE}
	\d X(t) = \left\{-(X(t)\cdot\nn)X(t)+ b(t,X_t,\L_{X_t})\right\}\d t + \si(t,X_t,\L_{X_t}) \d W(t),\ 
	X(0)=X_0\in H^s,\   t\in [0,T].
\end{equation} 
We refer to Sections \ref{subsect:framework} and \ref{subsect:DST} for the precise meaning of the notations and precise setting of \eqref{EE} and \eqref{TE}, respectively. Before going further, we would like to explain that working with the  abstract framework in \eqref{EE}  
entails some difficulties:

\begin{enumerate}[label={\textbf{(\alph*)}}]
	\item\label{difficulty-global}  Since we want to cover some stochastic fluid models in the  abstract system \eqref{EE}, we only assume that the coefficients $B$, $b$ and $\si$ are locally Lipschitz in $X$. As a result,
	we do not {\it a prior} know that the solution exists globally in time. This bring us an essential difficulty. More precisely, we notice that the distribution, as a global object on the  path space, does not exist  for explosive stochastic processes whose paths are killed at the life time. As a result,
	to  investigate distribution dependent SDEs/SPDEs, we have to either consider the non-explosive setting or modify the ``distribution" by a local notion (for example,  conditional distribution given by solution does not blow up at present time).

	\item\label{difficulty-localize} Again, because the coefficients are only locally Lipschitz in $X$, we will have to localize them (by using stopping times) when we need to fix the changing Lipschitz constants. For instance, this happens when the uniqueness is considered. Then  we will be confronted with the difficulty that distribution can not be controlled by any local condition, again. And we need to identify some appropriate topology under which the distribution can be measured locally.
	
	\item\label{difficulty-Ito} Because of the singular term $B(t,X)$, compared to classical case, the It\^{o} formula is no longer available. Indeed,  to estimate $\|X\|^2_{\H}$, to use the It\^{o} formula  in a Hilbert space  (cf. \cite{Prato-Zabczyk-2014-Cambridge,Gawarecki-Mandrekar-2010-Springer}), the $\H$ inner product $ \left(B(t,X), X\right)_{\H}$ is required to be well-defined. But it is not because we only assume that $B$ takes value in $\BB\hookleftarrow\H$. Likewise, to apply the It\^{o} formula  under a  Gelfand triplet (\cite{Krylov-Rozovskiui-1979-chapter,PR-2007-book}), the dual product $\tensor[_\BB]{\langle B(t,X),X\rangle}{_{\BB^*}}$ needs to be well-defined, where $\BB^*$ is the dual space of $\BB$ with respect to $\H$. Because $\H\hookrightarrow\BB$, we see that $\BB^*\hookrightarrow\H$. However, we do not {\it a prior} know that the solution $X$ takes value in $\BB^*$ because we only assume $X(0)\in\H$.

\end{enumerate}

The first major goal  of this paper is to establish an abstract framework for  \eqref{EE}. The second goal of this work is to apply the abstract theory for \eqref{EE} to \eqref{TE}, which gives some new results for some ideal fluid systems. 

\begin{itemize}
	\item To achieve the first goal, we introduce the precise assumptions in Section \ref{subsect:framework} (see Assumption \ref{Assum-A}).
	Then we  provide our  main results  for \eqref{EE} in Theorem \ref{T1}.  The key requirements 
	for the proof are the assumption on the existence of appropriate  Lipschitz-continuous and  monotone regularizations for the singular
	term $B$.  For the difficulty \ref{difficulty-global}, in this paper we restrict our attention to the non-explosive  case only. To this end, we assume that the noise grows fast enough (cf. \ref{A3}), and then we will show that 
	the blow-up of solutions can be prevented. For the difficulty \ref{difficulty-localize}, we introduce a ``local" Wasserstein distance (see \eqref{local Wasserstein}) and assumption \ref{A5} to measure the difference of two measures, which enables us to prove the uniqueness. By introducing a mollifier satisfying certain estimates (see assumption \ref{A4}), we can overcome the difficulty \ref{difficulty-Ito}.
	
	\item With the general framework at hand, for nonlinear stochastic transport type equations,
	we are able to construct such regular approximation schemes  by using mollifying operators and establishing a commutator estimate (see Lemma \ref{Je commutator}), from which we can  
	verify the assumptions introduced Section \ref{subsect:framework} and obtain global existence and uniqueness of solutions in Sobolev spaces. This results is stated in Theorem \ref{T-TE}. 	
	Two examples of Theorem \ref{T-TE} are given. The first one,  cf. Example \ref{Exm Abs},  is a general nonlinear stochastic transport equation, and the second one is the the distribution-path dependent stochastic Camassa--Holm equation with or without Coriolis effect, cf. Example \ref{Exm SRCH}.
\end{itemize}

\subsection{A general framework}\label{subsect:framework}
Let $\H,\U$ be two separable Hilbert spaces, and let $ \Lhs_2(\U;\H)$ be the space of Hilbert-Schmidt operators from $\U$ to $\H$ with Hilbert-Schmidt norm $\|\cdot\|_{\Lhs_2(\U;\H)}$.
Throughout the paper we  fix a time $T>0$. For a Banach  space $\M$,
let  $\scr P_{T,\M}$ be  the set of probability measures on the path space $\C_{T,\M}:=C([0,T];\M)$. We also consider the weakly continuous path space
\begin{equation*}
\C_{T,\M}^w:=\left\{\xi: [0,T]\to \M \  \text{is weak continuous}\right\}.
\end{equation*}
Both $\C_{T,\M}$ and $\C_{T,\M}^w$ are Banach spaces
under the uniform norm
$$\|\xi\|_{T,\M}:= \sup_{t\in [0,T]} \|\xi(t)\|_\M.$$ Let $\scr P_{T,\M}^w$ be the space of all probability measures on $\C_{T,\M}^w$ equipped with the weak topology. Denote
$\scr P_{T,\M}= \{\mu\in \scr P_{T,\M}^w: \mu(\C_{T,\M})=1\}.$

For any  map $\xi: [0,T]\to \M$ and $t\in [0,T]$,  the path $\pi_t(\xi)$ of $\xi$ before time $t$ is given by
$$\pi_t(\xi):=\xi_t: [0,T]\to \M,\ \ \xi_t(s):= \xi(t\land s),\ \ s\in [0,T].$$ Then the  marginal distribution before time $t$ of a probability measure $\mu\in \scr P^w_{T,\M}$  reads
$$\mu_t:= \mu\circ \pi_t^{-1}.$$
Let $\L_\xi$ stand for the distribution of a random variable $\xi$. When more than one probability measure are considered, we denote $\L_{\xi}$ by $\L_{\xi|\mathbb P}$ to emphasize the reference probability measure $\P$.

The noise  $\{W(t)\}_{t\in [0,T]}$ is  a cylindrical Brownian motion on $\mathbb U$ with respect to a complete filtration probability space $(\Omega,\{\F_t\}_{t\ge 0}, \P)$, i.e.
$$W(t)=\sum_{i\ge 1} \beta^i(t) e_i,\ \ t\in [0,T]$$ for an orthonormal basis $\{e_i\}_{i\ge 1}$ of $\U$ and a sequence of independent one-dimensional Brownian motions $\{\beta^i\}_{i\ge 1}$ on $(\Omega,\{\F_t\}_{t\ge 0}, \P)$.

Consider the following nonlinear  distribution-path  dependent SPDE on $\H$:
\begin{equation}
\d X(t) = \left\{B(t, X(t))+ b(t,X_t,\L_{X_t})\right\}\d t + \si(t,X_t,\L_{X_t}) \d W(t),\ \
  \ \  t\in [0,T],
\end{equation}
where, for some separable Hilbert space $\BB$ with  $\H\hookrightarrow\hookrightarrow\BB$ ($``\hookrightarrow\hookrightarrow"$ means the embedding is compact),
\beq\label{BBS} \beg{split}
&B:  [0,T]\times \H\times \OO\to  \BB,\\
&b:  [0,T]\times \C_{T,\H}^w\times \scr P_{T,\H}^w\times \OO\to  \H,\\
&\si:  [0,T]\times \C_{T,\H}^w\times \scr P_{T,\H}^w\times \OO  \to \Lhs_2(\U;\H)
\end{split}\end{equation}
are progressively measurable maps.

\begin{defn}  (1) A progressively measurable    process $X_T:=\{X(t)\}_{t\in [0,T]}$ on $\H$ is called a solution of \eqref{EE}, if  it is continuous in $\BB$ and   $\P$-a.s.
	$$  X(t) = X(0)+\int_0^{t}  \left\{B(s, X(s))+ b(s,X_s,\L_{X_s})\right\}\d s + \int_0^{t} \si(s,X_s,\L_{X_s}) \d W(s),\ \ t\in [0,T],$$
	where 	$\int_0^{t} \left\{B(s, X(s))+ b(s,X(s),\L_{X_s})\right\}\d s$ is the Bochner integral on $\BB$ and
	 $t\mapsto \int_0^{t} \si(s,X(s),\L_{X_s}) \d W(s)$ is a continuous local martingale on $\H$.
	
	(2) A couple $(\tt X_T, \tt W_T)=(\tt X(t), \tt W(t))_{t\in [0,T]}$ is called a weak solution of \eqref{EE}, if there exists a complete filtration probability space $(\tt\OO,\{\tt\F_t\}_{t\ge 0},\tt \P)$ such that $\tt W_T$ is a cylindrical Brownian motion on $\mathbb U$ and $\tt X_T$ is a solution of \eqref{EE} for $(\tt W_T, \tt \P)$ replacing $(W_T, \P)$.
\end{defn}

Since both $X(t)$ and    $\int_0^{t}   b(s,X_s,\L_{X_s})\d s+ \int_0^{t} \si(s,X_s,\L_{X_s}) \d W(s)$ are stochastic processes on $\H$, so is
	$\int_0^t B(s,X(s))\d s$, although $B(s,X(s))$ only takes values in $\BB$.

To ensure the non-explosion such that the distribution is well defined, we will take a Lyapunov type condition \ref{A3} below.  We write $V\in\scr V$, if $V\in C^2([0,\infty);[0,\infty))$ satisfies
$$ V(0)=0,\  \ V'(r)>0{\rm\ and\ }   V''(r)\le 0  \ {\rm for\ } r\ge 0,   \ V(\infty):=\lim_{r\to\infty} V(r)=\infty.$$
 Consider  the  following ``Wasserstein distance" induced by $V\in\scr V$:
$$\W_{2,\M}^{V}(\mu,\nu):= \inf_{\pi\in \scr C(\mu,\nu)} \int_{{\C^w_{T,\M}}\times {\C^w_{T,\M}}}    V(\|\xi-\eta\|_{T,\M}^2) \pi(\d\xi,\d\eta),\ \ \mu,\nu\in \scr P_{T,\M}^w,$$
where $\C(\mu,\nu)$ is the set of couplings of $\mu$ and $\nu$. When $V(r)=r$, $\W_{2,\M}^{V}(\cdot,\cdot)$ reduces to $\W_{2,\M}(\cdot,\cdot)^2$ which is the square of the $L^2$-Wasserstein distance   on $\scr P^w_{T,\M}$. Moreover, let
\beq\label{TXN} \tau_n^\xi=\inf\{t\ge 0:\|\xi(t)\|_\H\ge n\},\ \  n>0, \xi\in \C_{T,\H}^w. \end{equation}
Here and in the sequel, we set $\inf\emptyset =\infty$ by convention. We define the $``$local" $L^2$-Wasserstein distance by
\begin{equation}\label{local Wasserstein}
\W_{2,\BB,N}(\mu,\nu)= \inf_{\pi\in\C(\mu,\nu)} \bigg(\int_{\C_{T,\BB}\times\C_{T,\BB}}  \|\xi_{t\land \tau_N^\xi\land\tau_N^\eta}-\eta_{t\land \tau_N^\xi\land\tau_N^\eta}\|_{T,\BB}^2\pi(\d\xi,\d\eta)\bigg)^{\ff 1 2},\ \
\mu,\nu\in \scr P_{T,\BB}.
\end{equation}

We write $\mu\in \scr P_{T,\H}^V$ if $\mu\in \scr P_{T,\H}$ and
\begin{equation*}
\|\mu\|_{V}:=\int_{\C_{T,\H}}   V( \|\xi\|_{T,\H}^2) \mu(\d\xi)<\infty.
\end{equation*}
In general,  $\|\cdot\|_{V}$ may not  be a norm, but  we use this notation for simplicity.
A subset $A\subset \scr P_{T,\H}^V$ is called $V-$bounded if $\sup_{\mu\in A} \|\mu\|_V<\infty$.

   Let $T>0$ be  arbitrary.
	For any  $N>0$,    let
	\begin{align} &\C^w_{T,\H,N}=\{\xi\in\C^w_{T,\H}: \|\xi\|_{T,\H}\le N\},\ \
	\scr{P}^w_{T,\H,N}=\{\mu\in \scr P^w_{T,\H}: \mu(\C^w_{T,\H, N})=1\}. \label{Assum-notation}
	\end{align}

\begin{Assumptions}\label{Assum-A}
	Assume that $\H\hookrightarrow\hookrightarrow\BB$ is dense,  and
	there exists a dense subset $\H_0$ of $\BB^*$, the dual space of $\BB$ with respect to $\H$ such that the following conditions hold for $B,b$ and $\sigma$ in \eqref{BBS}. 	
\begin{enumerate}[label={ $(A_\arabic*)$}]
	
	\item\label{A1} $\|b(\cdot,0,\dd_0)\|_\H+\|\si(\cdot,0,\dd_0)\|_{\Lhs_2(\U;\H)}$ is   bounded on $[0,T]\times\OO$. And for any $N\ge 1$,  there exists  a constant $C_N>0$   such that   for any $\xi,\eta\in \C_{T,\H,N}$ and $\mu,\nu\in \scr P^V_{T,\H}$,	
			\begin{align*}
&\|b(t,\xi_t, \mu_t)- b(t,\eta_t,\nu_t)\|_\H+\|\si(t,\xi_t,\mu_t)- \si(t,\eta_t,\nu_t)\|_{\Lhs_2(\U;\H)}\\
&\le C_{N}\left\{\|\xi_t-\eta_t\|_{T, \H}+\W_{2,\BB}(\mu_t,\nu_t) \right\},\ \ t\in [0,T]. \end{align*}
 Next,  for any   bounded sequences $ \{(\xi^n,\mu^n)\}_{n\ge 1}\subset \C_{T,\H}\times \scr P_{T,\H}^V$ with $\|\xi^n-\xi\|_{T,\BB}\to 0$ and
	$\mu^n\to \mu$ weakly in $\scr P_{T,\BB}$ as $n\to\infty$,    we have  $\P$-a.s.
	\begin{align*} \lim_{n\to\infty}  \left\{|_{\BB} \<b(t,\xi^n, \mu_t^n)-b(t,\xi,\mu_t), \eta\>_{\BB^*} |+ \|\{\si(t,\xi^n, \mu_t^n) - \si(t,\xi,\mu_t)\}^*\eta\|_{\U}\right\}=0,\ \ \eta\in \H_0
	\end{align*}  and for any $N\ge 1$ there exists a constant $\tt C_N>0$ such that
	$$\sup_{t\in [0,T], \eta\in \C_{T,\BB,N}} \left\{\|b(t, \eta, \mu_t^n)\|_\BB+ \|\si(t,\eta,\mu_t^n)\|_{\Lhs_2(\U;\BB)}\right\}\le \tt C_N.$$

 \item\label{A2}   There exist  constants $\{C_N, C_{n,N}>0: n,N\ge 1\}$ and a sequence   of progressively measurable maps
	$$B_n: [0,T]\times\H\times \OO\to \H,\ \ n\ge 1$$  such that 	
	\begin{equation*}
	\begin{split}  & \sup_{t\in [0,T], \|x\|_\H\le N}\big(  \|B(t,x)\|_\BB+\|B_n(t,x)\|_\BB\big)\le C_N,\ \ n,N\ge 1,\\
		&\sup_{t\in [0,T], \|x\|_\H\lor\|y\|_\H\le N} \left\{\|B_n(t,x)\|_\H   +1_{\{x\ne y\}} \ff{\|B_n(t,x)-B_n(t,y)\|_\H}{\|x-y\|}\right\} \le C_{n,N},\ \ n,N\ge 1.\end{split}
	\end{equation*}
	Moreover,     for any bounded sequence  $\{\xi^n\}_{n\ge 1}$ in $\C_{T,\H}^w$
	with $\|\xi^n-\xi\|_{T,\BB} \to 0$ as $n\to\infty$, we have
	$$\lim_{n\to\infty} \int_0^T\left|
	\tensor[_\BB]{\left\<B_n(t,\xi^n(t))-B(t,\xi(t)),\eta\right\>}{_{\BB^*}}
	\right|\,\d t=0,\ \ \eta\in \H_0.$$

 \item\label{A3}  There exist $V\in \scr V$ and constants $K_1,K_2>0$ such that for any $\mu\in \scr P_{T,\H}$, $t\in [0,T],   \xi\in{\C_{T,\H}}$ and $n\ge 1,$
	\begin{align*} &V'(\|\xi(t)\|_\H^2) \left\{  2\big\< B_n(t,\xi(t))+ b(t,\xi_t,\mu_t), \xi(t)\big\>_{\H}+ \|\si(t,\xi_t,\mu_t)\|_{\Lhs_2(\U;\H)}^2 \right\}\\
	&+2 V''(\|\xi(t)\|_\H^2) \|\si(t,\xi_t,\mu_t)^*\xi(t)\|_\U^2
	\le K_1-K_2\ff{\{V'(\|\xi(t)\|_\H^2)\|\si(t,\xi_t,\mu_t)^*\xi(t)\|_\U\}^2}{ 1+V(\|\xi(t)\|_\H^2)}.\end{align*}

\item\label{A4}	There exists  a sequence of continuous linear operators $\{T_n\}_{n\ge 1}$ from $\BB$ to $\H$ with
	\begin{equation}\label{A4-1}
	\|T_nx\|_\H\le \|x\|_\H,\ \ \lim_{n\to\infty} \|T_n x-x\|_\H=0,\ \ x\in \H,
	\end{equation}
	such that  for any $N\ge 1$, there exists a constant $C_N>0$ such that
	\begin{equation}\label{A4-2}
	\sup_{\|x\|_\H\le N, n\ge 1} \left|\<T_n B(t,x), T_nx\>_\H\right|\le C_N.
	\end{equation}

\item\label{A5}
 There exist constants $K,\vv>0$ and an increasing map $C_\cdot: \mathbb N\to (0,\infty)$ such that for any $N\ge 1$, $\xi,\eta\in \C^w_{T,\H,N}$ and $\mu,\nu\in \scr P^w_{T,\H},$
	\begin{align*} &\left\<B(t,\xi(t))-B(t,\eta(t)), \xi(t)-\eta(t)\right\>_\BB\le C_N \|\xi(t)-\eta(t)\|^2_\BB,\\
&\|b(t,\xi_t, \mu_t)- b(t,\eta_t,\nu_t)\|_\BB+\|\si(t,\xi_t,\mu_t)- \si(t,\eta_t,\nu_t)\|_{\Lhs_2(\U;\BB)}\\
&\le C_{N}\left\{\|\xi_t-\eta_t\|_{T, \BB}+
\W_{2,\BB,N}\left(\mu_t,\nu_t\right)+K\e^{-\vv C_N}\left( 1\land \W_{2,\BB}(\mu_t,\nu_t)\right)\right\},\ \ t\in [0,T].
\end{align*}
	
\end{enumerate}

 \end{Assumptions}

\begin{thm}\label{T1}  Let $X_0\in L^2(\OO\to\H, \F_0,\P)$.
 \begin{enumerate} [label={\bf (\roman*)}]

 	\item\label{Thm-Existence-weak}  Assume \ref{A1}--\ref{A3}.
 	Then \eqref{EE} has a  weak  solution $(\tt X_T,\tt W_T)$ such that $\L_{\tt X(0)|\tt\P}=\L_{X_0|\P}$ and
	 \begin{equation}
	\label{Q2}\begin{split}
	\tt\E\left[ V(\|\tt X_T\|_{T,\H}^2)\right]
	\le   2K_1T +1+ \ff{64}{K_2}\left( K_1T+\tt \E[V(\|\tt X(0)\|_\H^2)]\right)<\infty.
	\end{split}
	\end{equation}
	\item \label{Thm-Coninuous}  If \ref{A4} holds, then the weak solution is continuous in $\H$.
	\item \label{Thm-Unique} If \ref{A5}  holds, then $\eqref{EE}$ has a  unique   solution  with initial value $X_0$. \end{enumerate}
	\end{thm}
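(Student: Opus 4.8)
\emph{Strategy and the basic estimate.} The plan is to prove \ref{Thm-Existence-weak}--\ref{Thm-Unique} by the regularity approximation announced in the abstract: replace the singular drift $B$ by the regular maps $B_n$ of \ref{A2}, solve the resulting regular distribution-path dependent SPDE with initial value $X(0)$, and pass to the limit $n\to\infty$. Fix $n\ge1$ and consider
\[
\d X(t)=\big\{B_n(t,X(t))+b(t,X_t,\L_{X_t})\big\}\,\d t+\si(t,X_t,\L_{X_t})\,\d W(t).
\]
By \ref{A1} and \ref{A2} its coefficients are progressively measurable maps into $\H$ and $\Lhs_2(\U;\H)$ which, on every $\H$-ball, are Lipschitz in $(x,\xi,\mu)$ (the distribution entering through $\W_{2,\BB}$); cutting the coefficients off at $\H$-norm $N$, solving by a standard fixed-point argument in the distribution variable, and removing the cut-off with the help of the a priori bound below, one obtains a (weak) solution $X^n$ with $\L_{X^n}\in\scr P_{T,\H}^V$. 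The decisive uniform estimate comes from It\^o's formula for $V(\|X^n(t)\|_\H^2)$: by \ref{A3} (valid with $B_n$ in place of $B$) its drift is $\le K_1-K_2\ff{\{V'(\|X^n(t)\|_\H^2)\,\|\si(t,X^n_t,\L_{X^n_t})^*X^n(t)\|_\U\}^2}{1+V(\|X^n(t)\|_\H^2)}$, while the quadratic variation of the martingale part equals $4$ times the integral of $\big(1+V(\|X^n(t)\|_\H^2)\big)$ against this Lyapunov dissipation; hence Burkholder--Davis--Gundy bounds $\E\sup_{s\le T}|\text{martingale}|$ by $\E\big[\big(1+\sup_{s\le T}V(\|X^n(s)\|_\H^2)\big)^{1/2}\big(\int_0^T\!\text{dissipation}\,\d s\big)^{1/2}\big]$, and Young's inequality (absorbing a fraction of $\E[\sup_{s\le T}V(\|X^n(s)\|_\H^2)]$ on the left, and controlling $\E\int_0^T\!\text{dissipation}$ by taking expectations in It\^o's formula) gives non-explosion and
\[
\sup_{n\ge1}\E\!\left[V(\|X^n_T\|_{T,\H}^2)\right]\le 2K_1T+1+\ff{64}{K_2}\big(K_1T+\E[V(\|X(0)\|_\H^2)]\big)<\infty .
\]

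\emph{Proof of \ref{Thm-Existence-weak}.} Using $\H\hookrightarrow\hookrightarrow\BB$ and the uniform $V$-moment bound (so that $\sup_n\P(\tau_N^{X^n}<T)\to0$ as $N\to\infty$ by Chebyshev), the laws $\{\L_{X^n}\}$ are tight in $\scr P_{T,\BB}$: on $\{t\le\tau_N^{X^n}\}$ the time integral $\int_0^\cdot\{B_n(s,X^n(s))+b(s,X^n_s,\L_{X^n_s})\}\,\d s$ is uniformly Lipschitz in $t$ with values in $\BB$ by the uniform $\BB$-bounds in \ref{A1},\ref{A2}, and the stochastic integral is uniformly H\"older in $\H\hookrightarrow\BB$, so that an Arzel\`a--Ascoli/Aubin--Lions argument applies. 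A Skorokhod representation gives, on a new stochastic basis, $(\tt X^n,\tt W^n)$ with $\tt X^n\to\tt X$ in $\C_{T,\BB}$ a.s., $\tt W^n\to\tt W$, and $\tt\mu^n:=\L_{\tt X^n|\tt\P}\to\tt\mu$ weakly in $\scr P_{T,\BB}$, and the uniform $V$-moment bound forces $\tt X$ to be $\H$-valued with $\tt\E[V(\|\tt X_T\|_{T,\H}^2)]<\infty$. Testing the equation for $\tt X^n$ against $\eta\in\H_0$, the weak-convergence hypotheses $\lim_n\int_0^T\big|{}_\BB\<B_n(t,\tt X^n(t))-B(t,\tt X(t)),\eta\>_{\BB^*}\big|\,\d t=0$ (from \ref{A2}) and $\lim_n\big\{\big|{}_\BB\<b(t,\tt X^n,\tt\mu^n_t)-b(t,\tt X,\tt\mu_t),\eta\>_{\BB^*}\big|+\|\{\si(t,\tt X^n,\tt\mu^n_t)-\si(t,\tt X,\tt\mu_t)\}^*\eta\|_\U\big\}=0$ (from \ref{A1}), together with the uniform $\BB$-bounds in \ref{A1},\ref{A2} providing uniform integrability, let us pass to the limit in the time integrals, while the stochastic integral converges by the standard lemma on convergence of stochastic integrals; thus $(\tt X_T,\tt W_T)$ is a weak solution, $\L_{\tt X(0)|\tt\P}=\L_{X(0)|\P}$ (as all $\tt X^n(0)$ have that law), and $\L_{\tt X|\tt\P}=\tt\mu$ since $\L_{\tt X^n|\tt\P}=\tt\mu^n\to\tt\mu$ and $\tt X^n\to\tt X$. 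Finally \eqref{Q2} follows from the displayed bound by Fatou's lemma, using monotonicity of $V$ and weak lower semicontinuity of $\|\cdot\|_\H$ under convergence in $\BB$.

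\emph{Proof of \ref{Thm-Coninuous} and \ref{Thm-Unique}.} For \ref{Thm-Coninuous}: a solution is a priori only $\BB$-continuous, but bounded and hence weakly continuous in $\H$; applying It\^o's formula to $\|T_mX(t)\|_\H^2$ (legitimate as $T_m:\BB\to\H$ is bounded linear) and letting $m\to\infty$, the regular terms converge by \eqref{A4-1} while $\<T_mB(t,X(t)),T_mX(t)\>_\H$ stays uniformly bounded by \eqref{A4-2}, which produces an It\^o formula for $t\mapsto\|X(t)\|_\H^2$; its (a.s.) continuity combined with weak $\H$-continuity upgrades $X$ to an $\H$-continuous process. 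For \ref{Thm-Unique}: let $X,Y$ be two solutions with $X(0)=Y(0)$; by \eqref{Q2}, $\tau_N:=\tau_N^X\land\tau_N^Y\to\infty$ a.s.\ and $r_N:=\P(\tau_N<T)\to0$. It\^o's formula for $\|X(t\land\tau_N)-Y(t\land\tau_N)\|_\BB^2$, the first inequality of \ref{A5} for the $B$-term, the second for the $b$- and $\si$-terms (with the coupling induced by $(X,Y)$, so that $\W_{2,\BB,N}(\L_{X_s},\L_{Y_s})^2\le\E\|X_{s\land\tau_N}-Y_{s\land\tau_N}\|_{T,\BB}^2$), and Burkholder--Davis--Gundy lead, with $h_N(t):=\E\|X_{t\land\tau_N}-Y_{t\land\tau_N}\|_{T,\BB}^2$, to a Gr\"onwall-type inequality whose inhomogeneity is controlled, up to the factor $\e^{-\vv C_N}$, by $h_N(s)$ and by $r_N$; it is exactly the purpose of the gain $\e^{-\vv C_N}$ in \ref{A5} to keep the resulting estimate under control as $N\to\infty$, so that $h_N(T)\to0$, whence $X\equiv Y$ because $h_N(T)\uparrow\E\|X_T-Y_T\|_{T,\BB}^2$. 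Finally, pathwise uniqueness together with the weak solution of \ref{Thm-Existence-weak} gives, by the Yamada--Watanabe theorem---once pathwise uniqueness has pinned down the common law $\L_X$, making \eqref{EE} a classical path-dependent SPDE---a unique (strong) solution with initial value $X(0)$.

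\emph{Main obstacle.} Two points are delicate. In \ref{Thm-Existence-weak}, one must pass to the limit simultaneously in the singular term, the distribution argument and the stochastic integral, which forces a careful matching of the weak-convergence hypotheses in \ref{A1}--\ref{A2} with the uniform $\BB$-bounds (the latter being available only after localization at $\tau_N^{X^n}$). In \ref{Thm-Unique}, the book-keeping in the uniqueness estimate is subtle: the localization level $N$ and the exponential gain $\e^{-\vv C_N}$ must be balanced so that the genuinely non-local distribution contribution $\W_{2,\BB}(\L_{X_t},\L_{Y_t})$---controlled only on the full path space, not after stopping at $\tau_N$---does not spoil the Gr\"onwall argument.
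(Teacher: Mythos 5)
Your overall strategy coincides with the paper's: replace $B$ by $B_n$, solve the regular distribution-path dependent equation (the paper's Section~2 does this by localizing the path via stopping times and a Picard iteration), extract the uniform $V$-moment bound from It\^o's formula, \ref{A3} and BDG exactly as you describe, then use tightness in $\C_{T,\BB}$, Skorokhod representation and the weak-convergence hypotheses in \ref{A1}--\ref{A2} to pass to the limit, and finally Yamada--Watanabe plus a localized Gr\"onwall argument for uniqueness. Two steps, however, are asserted in a form in which they would fail as written.

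First, in the uniqueness argument you claim that the Gr\"onwall inequality yields $h_N(T)\to 0$ as $N\to\infty$. The inequality one actually obtains is of the form $h_N(s)\le K_0C_N\int_0^s\{h_N(r)+\e^{-\vv C_N}\}\,\d r$, whose Gr\"onwall consequence is $h_N(s)\lesssim sC_N\,\e^{K_0C_Ns-\vv C_N}$. Since $C_N\to\infty$, this tends to $0$ only for $s<\vv/K_0$; for $T>\vv/K_0$ the bound diverges and the conclusion is false as stated. The missing ingredient is the paper's step of first proving uniqueness up to $t_0:=(\vv/K_0)\land T$, then restarting from $Z_{t_0}=0$ and iterating over finitely many intervals of length $t_0$ to reach $T$. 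Your ``Main obstacle'' paragraph senses that a balance is needed, but the time-splitting device is what actually resolves it, and it is absent.

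Second, in part \ref{Thm-Coninuous} you propose to ``let $m\to\infty$'' in the It\^o formula for $\|T_mX(t)\|_\H^2$ to ``produce an It\^o formula for $\|X(t)\|_\H^2$.'' Condition \eqref{A4-2} gives only a uniform bound on $\<T_mB(t,X),T_mX\>_\H$, not convergence of this term, so no identity survives the limit. The workable route (and the paper's) is to use the uniform bound to derive a moment estimate such as $\tt\E[(\|T_mX(t\land\tt\tau_N)\|_\H^2-\|T_mX(s\land\tt\tau_N)\|_\H^2)^4]\le C_N(t-s)^2$ uniformly in $m$, pass it to $\|X(\cdot)\|_\H^2$ by Fatou using \eqref{A4-1}, and invoke Kolmogorov's continuity criterion; continuity of the norm combined with weak continuity in $\H$ then gives strong continuity, as you correctly conclude at the end.
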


	Now we give some remarks regarding the proof of Theorem \ref{T1} and Assumption \ref{Assum-A}.
\begin{rem}
		Except for the difficulties \ref{difficulty-global}, \ref{difficulty-localize} and \ref{difficulty-Ito}, we will be confronted with one additional technical obstacle. Indeed, 
	we notice that the singular term $B$ is in general not monotone in the sense of \cite{Pard} (see also \cite{PR-2007-book}). Therefore, even coming back to the distribution-path independent case, the  Galerkin  approximation under a Gelfand triple developed for quasi-linear  SPDEs does not work for the present model. 
	To overcome this obstacle, we will take a different regularization argument. The proof of Theorem \ref{T1} includes two main steps:

	\begin{description}
		\item[Step 1: regular case] We first establish the solvability of the regular case, i.e., $B=0$ (see Proposition \ref{PR1}). In this step, we need \ref{A1}  as \ref{A1} describes the local Lipschitz continuity of the regular coefficients $b(t,\xi,\mu)$ and $\si(t,\xi,\mu)$ in $(\xi,\mu)$ under the metric induced by $\|\cdot\|_\H$ and $\W_{2,\BB}$.  Recalling the difficulty \ref{difficulty-global} mentioned before, we 
		restrict our attention to the non-explosive case.  Hence we need the assumption \ref{A3} which  is a Lyapunov type condition
		 ensuring the global existence of the solution.  Furthermore,
		\ref{A5} means that the dependence on the distribution of the coefficients
		 is asymptotically determined by the distribution of local paths, and it will be used to prove the pathwise uniqueness. Actually, \ref{A5} is proposed to overcome the difficulty \ref{difficulty-localize}.  
		
		\item[Step 2: singular case]  
		Then we will propose a regularization argument to establish existence and uniqueness to \eqref{EE}. Therefore in \ref{A2} we assume that the singular term $B\in\BB$ can be approximated by a regular term $B_n\in\H$ with certain nice properties. 		
		The result in \textbf{Step 1} guarantees that the approximation problem (see \eqref{P11}, where $B$ in \eqref{EE} is replaced by $B_n$) can be uniquely solved on $[0,T]$ for any given $T>0$ and we refer to Proposition \ref{PR1}.  				
		 Then we use the martingale approach to pass limit to the original problem \eqref{EE}, where we need the continuity of the coefficient in $\mu$ under the weak topology (see \ref{A1}). Precisely speaking,  by Prokhorov's theorem and Skorokhod's theorem, we can get almost sure convergence of the approximation solutions  relative to a new probability space. Then by the martingale representation theorem, we can identify the limit of the stochastic integral. Finally we establish the uniqueness, which together with the Yamada-Watanabe type result gives the existence and uniqueness of a pathwise solution. Finally, as mentioned before, the It\^{o} formula can not be applied to to $\|X(t)\|^2_{\H}$ directly (see difficulty \ref{difficulty-Ito}). Hence it is not obvious to obtain the time continuity of the solution in $\H$. And we need to mollify the equation first by using some mollifiers. Hence \ref{A4} provides certain properties of such mollifiers.
		
	\end{description}

\end{rem}

\subsection{Distribution-path dependent stochastic transport type equations}\label{subsect:DST}
Let $d\geq1$ and $\mathbb{T}^d=(\R/2\pi\Z)^d$ be the $d$-dimensional torus.
Let $\DD$ be the Laplacian operator on $\mathbb T^d$, and let ${\rm i}$ denote  the imaginary unit.
Then $\{\e^{{\rm i} \<k,\cdot\>}\}_{k\in \mathbb Z^d}$ consists of an eigenbasis of the Laplacian $\DD$ in the complex $L^2$-space of the normalized   volume measure $\mu(\d x):= (2\pi)^{-d}\d x$ on $\mathbb T^d$:
$$\DD \e^{{\rm i}\<k,\cdot\>} = -|k|^2 \e^{{\rm i}\<k,\cdot\>},\ \ k\in \mathbb Z^d.$$
For a function $f\in L^2(\mu)$, its Fourier transform is given by
$$\widehat{f}(y):=\F(f) (y)= \mu(f\e^{{\rm i}\<y,\cdot\>})= \int_{\mathbb T^d} f \e^{{\rm i}\<y,\cdot\>}\d \mu,\ \ y\in \R^d.$$
It is well known that
\beg{equation}\label{X0} \|f\|_{L^2(\mu)}^2= \sum_{k\in\mathbb Z^d} |\widehat{f}(k)|^2,\ \ f\in L^2(\mu),\end{equation} and
\beg{equation}\label{XXX}
\sum_{m\in\Z^d} \widehat{g}(k-m) \widehat{f}(m)= \widehat{fg}(k),\ \ k\in\Z^d, f,g\in L^4(\mu).\end{equation}
By the spectral representation, for any $s\ge 0$, we have
\beg{align*} &D^sf:= (I-\DD)^{\ff s 2} f=  \sum_{k\in \mathbb Z^d} (1+|k|^2)^{\ff s 2} \widehat {f} (k) \e^{{\rm i}\<k,\cdot\>},\ \ k\in \mathbb Z^d,\\
& f\in \D(D^s):=  \left\{f\in L^2(\mu): \|D^sf\|_{L^2(\mu)}^2= \sum_{k\in\mathbb Z^d} (1+|k|^2)^s |\widehat f(k)|^2<\infty\right\}.\end{align*}
Then
$$H^s:= \{f=(f_1,\cdots, f_d): f_i\in \D(D^s), 1\le i\le d\}$$   is a separable Hilbert space with inner product
$$\<f,g\>_{H^s}:= \sum_{i=1}^d  \<D^s f_i,D^s g_i\>_{L^2(\mu) } = \sum_{k\in \mathbb Z^d} (1+|k|^2)^s \<\widehat f(k), \widehat g(k)\>_{\R^d}.$$

 Now,  we recall the stochastic transport SPDE \eqref{TE}  on $H^s$:
\begin{equation}
\d X(t) = \left\{-(X(t)\cdot\nn)X(t)+ b(t,X_t,\L_{X_t})\right\}\d t + \si(t,X_t,\L_{X_t}) \d W(t),\ \
 \ \  t\in [0,T],
\end{equation} where $W(t)$ is the cylindrical Brownian  motion on $\mathbb U:= L^2(\mathbb T^d\to \R^d)$, and
$$ b: [0,T]\times \C_{T,H^s}^w\times \scr P_{T,H^s}^w\times \OO\to H^s,\ \ \si:  [0,T]\times \C_{T,H^s}^w\times \scr P_{T,H^s}^w\times \OO\to \Lhs_2(\U;H^s)$$
are measurable.

To apply Theorem \ref{T1}, we make the following assumptions on $b$ and $\si$.

\begin{Assumptions}\label{Assum-B}
 Let  $d\ge1$,  $V\in \scr V$, $s>\ff d 2 +2$, $s'=s-1$ and $T>0$ be  arbitrary.
We assume that the following conditions hold for $\H=H^s$ and $\BB=H^{s'}$.

\begin{enumerate}[label={ $(B_\arabic*)$}]
	\item\label{B1}  Conditions in  \ref{A1} hold. 	
	
	\item\label{B2}
	There exist  constants  $K_1,K_2>0$ such that for any $\mu\in \scr P_{T,\H}$, $t\in [0,T],   \xi\in{\C_{T,\H}}$ and $n\ge 1,$
	\begin{align*}
	&V'(\|\xi(t)\|_\H^2) \left\{ 2 K_0 \|\xi(t)\|_{\BB}\|\xi(t)\|^2_{\H}+ 2\big\< b(t,\xi_t,\mu_t), \xi(t)\big\>_{\H}+ \|\si(t,\xi_t,\mu_t)\|_{\Lhs_2(\U;\H)}^2 \right\}\\
	&+2 V''(\|\xi(t)\|_{\H}^2) \|\si(t,\xi_t,\mu_t)^*\xi(t)\|_\U^2
	\le K_1-K_2\ff{\{V'(\|\xi(t)\|_{\H}^2)\|\si(t,\xi_t,\mu_t)^*\xi(t)\|_\U\}^2}{ 1+V(\|\xi(t)\|_{\H}^2)}.
	\end{align*}

	\item\label{B3}   There exist constants $K,\vv>0$ and an increasing map $C_\cdot: \mathbb N\to (0,\infty)$ such that for any $N\ge 1$, $\xi,\eta\in \C^w_{T,\H,N}$ and $\mu,\nu\in \scr P^w_{T,\H},$
	\begin{align*}
&\|b(t,\xi_t, \mu_t)- b(t,\eta_t,\nu_t)\|_{ \BB}+\|\si(t,\xi_t,\mu_t)- \si(t,\eta_t,\nu_t)\|_{\Lhs_2(\U; \BB)}\\
&\le C_{N}\left\{\|\xi_t-\eta_t\|_{T,\BB}+
\W_{2,\BB,N}\left(\mu_t,\nu_t\right)+K\e^{-\vv C_N}\left( 1\land \W_{2,\BB}(\mu_t,\nu_t)\right)\right\},\ \ t\in [0,T].
\end{align*}
\end{enumerate}
\end{Assumptions}

Then we have  the following result:

\begin{thm}\label{T-TE} Assume $s>\frac{d}{2}+2$, \ref{B1} and \ref{B2}. For any $X_0\in L^2(\OO\to H^s, \F_0,\P)$,  \eqref{TE}  has a weak  solution $(\tt X_T,\tt W_T)$ such that $\L_{\tt X(0)|\tt\P}= \L_{X_0|\P}$, $\tt X_T$ is continuous in $H^s$ and
	\begin{equation}
	\label{Q-TE}\begin{split}
	\tt \E\left[ V(\|\tt X_T\|_{T,H^s}^2)\right]
	\le  2K_1T +1+ \ff{64}{K_2}\left( K_1T+\tt \E[V(\|\tt X(0)\|_{H^s})]\right).
	\end{split}
	\end{equation}
If, moreover, \ref{B3} holds, then \eqref{TE} has a unique solution.
\end{thm}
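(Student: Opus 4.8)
The plan is to derive Theorem~\ref{T-TE} from Theorem~\ref{T1} by verifying the abstract hypotheses \ref{A1}--\ref{A5} for the identifications $\H=H^s$, $\BB=H^{s'}=H^{s-1}$, $B(t,X)=-(X\cdot\nn)X$ (which does not depend on $\oo$, so progressive measurability is trivial), and $\H_0\subset\BB^{*}=H^{s+1}$ the dense subspace of $\R^d$-valued trigonometric polynomials; note $H^s\hookrightarrow\hookrightarrow H^{s-1}$ densely on $\mathbb T^d$ since $(1+|k|^2)\to\infty$. For \ref{A2}--\ref{A3} we take the Galerkin truncation
\[
B_n(t,X):=-\Pi_n\big((\Pi_nX\cdot\nn)\Pi_nX\big),\qquad \Pi_nX:=\sum_{|k|\le n}\widehat X(k)\,\e^{{\rm i}\<k,\cdot\>},
\]
which is valued in the trigonometric polynomials of degree $\le n$, hence in $H^s$; for \ref{A4} we instead use a Friedrichs mollifier $T_n:=J_n$, the Fourier multiplier with symbol $\widehat\rho(\cdot/n)$ where $\rho\in C_c^\infty$ is even and nonnegative with $\widehat\rho\ge0$ and $\widehat\rho(0)=\int\rho=1$, so that $0\le\widehat\rho(\cdot/n)\le1$, $J_n$ commutes with $D^s$, $\|J_nx\|_{\H}\le\|x\|_{\H}$, $J_nx\to x$ in $\H$, and $J_n\colon H^{s-1}\to H^s$ is bounded. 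The portions of \ref{A1}, \ref{A3} and \ref{A5} concerning $b$ and $\si$ are exactly \ref{B1}, \ref{B2} and \ref{B3}, so all that remains is the ``$B$-part'' of \ref{A1}--\ref{A5}, i.e.\ the statements about $B$, $B_n$ and $T_n$.

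These reduce to three classical facts on $\mathbb T^d$, all available because $s-1>\ff d2+1$: (i) $H^\sigma$ is a Banach algebra and embeds into $W^{1,\infty}\cap C^1$ for $\sigma>\ff d2+1$; (ii) the Kato--Ponce estimate $\|[D^\sigma,f\cdot\nn]g\|_{L^2(\mu)}\le C(\|\nn f\|_{L^\infty}\|g\|_{H^\sigma}+\|f\|_{H^\sigma}\|\nn g\|_{L^\infty})$ together with the antisymmetry identity $\<(f\cdot\nn)g,g\>_{L^2(\mu)}=-\ff12\<(\nn\cdot f)g,g\>_{L^2(\mu)}$; and (iii) the DiPerna--Lions/Friedrichs commutator lemma $\|[J_n,f\cdot\nn]g\|_{L^2(\mu)}\le C\|\nn f\|_{L^\infty}\|g\|_{L^2(\mu)}$, uniformly in $n$. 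The algebra property at level $s-1$ gives $\|B(t,x)\|_{\BB}=\|(x\cdot\nn)x\|_{H^{s-1}}\lesssim\|x\|_{H^{s-1}}\|x\|_{H^s}$, and likewise $\|B_n(t,x)\|_{\BB}\lesssim\|x\|_{H^{s-1}}\|x\|_{H^s}$ (using $\|\Pi_n\|\le1$ on each $H^\sigma$), which yields the $\BB$-bounds in \ref{A2}; the $H^s$- and $H^s$-Lipschitz bounds on $B_n$ hold with $n$-dependent constants because $X\mapsto B_n(t,X)$ is a smooth quadratic map into the trigonometric polynomials of degree $\le n$, with norms controlled by $(1+n^2)$ and $\|X\|_{H^{s-1}}$. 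For the convergence in \ref{A2}, fix $\eta\in\H_0$: unfolding the $\BB$--$\BB^*$ pairing into an $L^2$ pairing and using that $\Pi_n\eta=\eta$ for $n$ large, $_{\BB}\<B_n(t,\xi^n(t)),\eta\>_{\BB^*}=-\<(\Pi_n\xi^n(t)\cdot\nn)\Pi_n\xi^n(t),D^{2s}\eta\>_{L^2(\mu)}$; since $\Pi_n\xi^n(t)\to\xi(t)$ in $H^{s-1}$ uniformly in $t$ and $H^{s-1}\hookrightarrow C^1$, the integrand converges uniformly in $t$ to the corresponding expression with $\xi(t)$, and it is dominated by $C\sup_n\|\xi^n\|_{T,\BB}^2<\infty$; dominated convergence finishes.

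For \ref{A3} the decisive estimate is $\<B_n(t,\xi),\xi\>_{\H}\le K_0\|\xi\|_{\BB}\|\xi\|_{\H}^2$ with $K_0$ \emph{independent of $n$}: since $\Pi_n$ is an orthogonal projection in $H^s$, $\<B_n(t,\xi),\xi\>_{H^s}=-\<(\Pi_n\xi\cdot\nn)\Pi_n\xi,\Pi_n\xi\>_{H^s}$, and expanding with $D^s$ and using antisymmetry on the leading term and Kato--Ponce on the commutator bounds this by $C\|\nn\Pi_n\xi\|_{L^\infty}\|\Pi_n\xi\|_{H^s}^2\lesssim\|\xi\|_{H^{s-1}}\|\xi\|_{H^s}^2$, where $H^{s-1}\hookrightarrow W^{1,\infty}$ (i.e.\ $s>\ff d2+2$) is used. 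Inserting this into the left side of \ref{A3} and invoking \ref{B2} gives \ref{A3} with the same $K_1,K_2$. For \ref{A4}, \eqref{A4-1} is immediate, and for \eqref{A4-2} write $\<J_nB(t,x),J_nx\>_{H^s}=-\<J_nD^s((x\cdot\nn)x),J_nD^sx\>_{L^2(\mu)}$ (as $J_n$ commutes with $D^s$), split $D^s((x\cdot\nn)x)=(x\cdot\nn)D^sx+[D^s,x\cdot\nn]x$ and then $J_n((x\cdot\nn)D^sx)=(x\cdot\nn)J_nD^sx+[J_n,x\cdot\nn]D^sx$; the resulting three pieces are controlled via the antisymmetry identity, the DiPerna--Lions lemma, and Kato--Ponce, each by $\lesssim\|\nn x\|_{L^\infty}\|x\|_{H^s}^2\le C_N$ for $\|x\|_{H^s}\le N$, uniformly in $n$. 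Finally, for the first inequality of \ref{A5} write $B(t,\xi)-B(t,\eta)=-((\xi-\eta)\cdot\nn)\xi-(\eta\cdot\nn)(\xi-\eta)$ and pair with $\xi-\eta$ in $H^{s-1}$: the first term is $\lesssim\|((\xi-\eta)\cdot\nn)\xi\|_{H^{s-1}}\|\xi-\eta\|_{H^{s-1}}\lesssim\|\xi\|_{H^s}\|\xi-\eta\|_{H^{s-1}}^2$ by the algebra property, while the second, after commuting $D^{s-1}$ inside, splits into an antisymmetric piece $\lesssim\|\nn\eta\|_{L^\infty}\|\xi-\eta\|_{H^{s-1}}^2$ and a Kato--Ponce piece $\lesssim(\|\nn\eta\|_{L^\infty}+\|\eta\|_{H^{s-1}})\|\xi-\eta\|_{H^{s-1}}^2$; since $\|\nn(\cdot)\|_{L^\infty}\lesssim\|\cdot\|_{H^{s-1}}$ (again $s>\ff d2+2$), the total is $\le C_N\|\xi-\eta\|_{\BB}^2$ on $\C^w_{T,\H,N}$. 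With the $b,\si$-part of \ref{A5} being \ref{B3}, Theorem~\ref{T1} applies: part~\ref{Thm-Existence-weak} gives a weak solution with the bound \eqref{Q-TE} (which is \eqref{Q2} for $\H=H^s$), part~\ref{Thm-Coninuous} via \ref{A4} gives $H^s$-continuity, and part~\ref{Thm-Unique} via \ref{A5} gives uniqueness.

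The main obstacle is to obtain the transport-nonlinearity estimates with the \emph{correct split of regularity} — the factor $\|\xi\|_{\BB}\|\xi\|_{\H}^2$ rather than $\|\xi\|_{\H}^3$ in \ref{A3} and \ref{A4}, and the one-derivative-loss local monotonicity $\<B(t,\xi)-B(t,\eta),\xi-\eta\>_{\BB}\le C_N\|\xi-\eta\|_{\BB}^2$ in \ref{A5} — all \emph{uniformly in the regularization parameter} $n$. These rest on combining the $L^2$-antisymmetry of $f\cdot\nn$ with a commutator bound (Kato--Ponce for \ref{A3}, \ref{A5} and two of the three pieces of \eqref{A4-2}, the DiPerna--Lions lemma for the mollifier commutator in \eqref{A4-2}) and on the borderline embedding $H^{s-1}\hookrightarrow W^{1,\infty}$, which is precisely what forces $s>\ff d2+2$; the choice of $\Pi_n$ as an $H^s$-orthogonal Fourier projection and of $T_n$ as a Fourier multiplier commuting with $D^s$ is what keeps the constants $n$-independent. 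Once these estimates are in hand the remaining verification of \ref{Assum-A} is routine bookkeeping.
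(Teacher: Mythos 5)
Your proposal is correct and follows essentially the same route as the paper: reduce to Theorem \ref{T1} with $\H=H^s$, $\BB=H^{s-1}$, and verify \ref{A2}--\ref{A5} for $B(X)=-(X\cdot\nn)X$ by combining the Kato--Ponce commutator estimate with the $L^2$-antisymmetry of the transport operator, a uniform mollifier-commutator bound, and the embedding $H^{s-1}\hookrightarrow W^{1,\infty}$ (which is where $s>\ff d2+2$ enters). The only immaterial differences are your use of the sharp Fourier truncation $\Pi_n$ in place of the paper's smooth multiplier $J_n$ for $B_n$, and of a Friedrichs mollifier together with the classical DiPerna--Lions commutator lemma in place of the resolvent $T_n=(I-n^{-2}\Delta)^{-1}$ and the paper's Lemma \ref{Te commutator} in the verification of \ref{A4}.
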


Below we give some remarks concerning Theorem \ref{T-TE}.

\begin{rem}
	 We first notice that \eqref{TE} does not contain the viscous term $\DD X(t)$, which provides additional regularization effect to make the problem of existence easier, see \cite[Chapter 5]{DP}. 
	 Besides the existence and uniqueness, it is interesting to clarify the effect of noise on the properties of solutions. We notice that existing results on the regularization effects by noises for transport type equations are mainly for linear equations or for linear growing noises, see for instance \cite{FF-13-JFA, FNO-18,  FGP-10-Inven,K-10-JFA,MO-17-BBMS,NO-15-NODEA} for linear transport equations, and \cite{FGP-11-SPA,GV-14-AoP,RZZ-14-SPA,T-18-SIMA} for linear noise. For nonlinear equations with nonlinear noise, there are  examples with positive answers showing that noises can be used to regularize singularities caused by nonlinearity.
	 For example, for the stochastic 2D Euler equations, coalescence of vortices may disappear   \cite{FGP-11-SPA}.  But there are also counterexamples such as the fact that noise does not prevent shock formation  in the Burgers equation, see \cite{F-10-notes}.
	 Therefore, for nonlinear  SPDEs, what kind of nonlinear noise can prevent blow-up is a question worthwhile to study. In the current work, the main idea is to use the
	 stochastic part of the equation to avoid any blow-up phenomena that could arise under the presence of the singular drift. Hence we use the Lyapunov type condition \ref{B2} to measure how strong the noise term needs to be (see also \cite[Theorem
	 III.4.1]{Hasminskii-1969-Book} for the finite dimensional case and  \cite{Brzezniak-etal-2005-PTRF} for the stochastic nonlinear beam equations). In this way, the noise effect given by the large enough noise is macroscopic and it is different from many previous works, where small noise can also bring regularization effect, see for example \cite{FGP-10-Inven,FGP-11-SPA}. Besides, the noise structure in \cite{FGP-10-Inven,FGP-11-SPA} are transport noise in the Stratonovich sense.  \textit{A priori}, it is not clear how to interpret the equation \eqref{TE}. In the current work, our main interest are mainly mathematical and we believe that 
	searching for nonlinear noise such that blow-up can be prevented is important because it helps us understand the regularizing mechanisms of noise. This in turn brings us one further
	 step  to find the really correct and physical noise which provides such regularization.

\end{rem}

\begin{rem}
	We remark here that there is a gap between the index $s>\frac{d}{2}+2$ in Theorem \ref{T-TE} and the critical value $s>\frac{d}{2}+1$ such that $H^s\hookrightarrow W^{1,\infty}$. Formally speaking, on one hand, because the transport term $(u\cdot\nabla) u$ loses one order of regularity,  we have to consider uniqueness in $H^{s'}$ with $s'\leq s-1$, i.e., we ask $\BB=H^{s'}$ in \ref{B3}. One the other hand, since $\<(u\cdot\nabla) u,u\>_{H^s}\leq c_s\|u\|_{W^{1,\infty}}\|u\|^2_{H^s}$ for smooth $u$, to verify \ref{B2}, we have to pick $s'\leq s-1$ such that $\BB=H^{s'}\hookrightarrow W^{1,\infty}$. Therefore we have to require $s-1>\frac{d}{2}+1$. However, if we  only consider  local solutions in $H^s$ without assuming  \ref{B2}    (as is explained before, in this case  the distribution has to be modified), then $s>\frac{d}{2}+1$ will be enough.
\end{rem}

To conclude this section, we present below two examples  to illustrate Theorem \ref{T-TE}.

\begin{Example}\label{Exm Abs}
Let $s,s'=s-1$ be in assumption \ref{Assum-B} and take $\mathbb U= H^s$.  Let  $\mu(F)= \int F\d\mu$ for $F\in L^1(\mu)$, and take
$$ b(t,\xi,\mu)= h(t,\|\xi\|_{H^{s'}}, \mu(F_b))\xi(t),\ \ \si(t,\xi,\mu)= \bb  (1+\|\xi\|_{T,H^{s'}})^\aa\<\xi(t),\cdot\>_{H^{s}}  x_0+\si_0(t, \|\xi\|_{H^{s'}},\mu(F_\si)),$$
where $\aa, \bb>0$ are constants to be determined, and \beg{enumerate}
\item[(1)]   $ x_0\in H^s$ with $\|x_0\|_{H^{s}}=1$ is a fixed element;
\item[(2)] $F_{b}, F_\si: \C_{T,H^{s'}}\to \R^m$ are bounded and Lipschtiz continuous for some $m\ge 1$;
\item[(3)] $h(t,\cdot,\cdot):  \R\times \R^{m}\to \R$ is locally Lipschtiz continuous uniformly in $t\in [0,T]$ such that
$$\sup_{(t,z)\in [0,T]\times \R^m, |x|\le r} |h(t,x,z)| \le  c (1+r^{2\aa} ),\ \ r\ge 0$$ holds for some constant $c>0;$
\item[(4)]  $\si_0(t,\cdot,\cdot):    \R\times \R^m\to \Lhs_2(H^s; H^s)$  is bounded and  locally Lipschtiz continuous uniformly in $t\in [0,T]$.
\end{enumerate}
 If $\aa\ge \ff 1 2$ and $\bb$ is large enough, then  for any probability measure $\mu_0$ on $H^s$ with $\mu_0(\|\cdot\|_{H^s}^2)<\infty$,  $\eqref{EE}$ has a weak  solution $(\tt X_T,\tt W_T)$ with $\L_{\tt X(0)|\tt\P}= \mu_0$,  which is continuous in $H^s$ and satisfies
 $$\tt\E\left[\ \log(1+\|\tt X_T\|_{T,H^s}^2)\right]<\infty.$$
 In particular, if $m=1$ and $F_b(\xi)= F_\si(\xi)= \|\xi\|_{T,H^{s'}}\land R$ for some constant $R>0$, then for any $X(0)\in L^2(\OO\to H^s,\F_0,\P)$,   $\eqref{EE}$  has a unique solution, which is continuous in $H^s$ and satisfies  $$\E\left[\log(1+\|X_T\|_{T,H^s}^2)\right]<\infty.$$
\end{Example}

\begin{proof}[Proof of Example \ref{Exm Abs}]
Let $\aa \ge \ff 1 2$, and take $V(r)= \log(1+r)\in\scr V$. By Theorem \ref{T-TE}, we only need to verify conditions \ref{A1}, \ref{B2} with $\H=\mathbb U=H^s$, $\BB=H^{s'}$, $\H_0=H^{s+1}$  and large enough $\bb>0$,
and finally prove \ref{B3} with $m=1$ and $F_b(\xi)=F_\si(\xi)= \|\xi\|_{T,H^s}\land R$.

To begin with,  it is easy to see that the weak convergence in $\scr P_{T, \BB}$ is equivalent to that in the metric
$$\W_{1,\BB}(\mu,\nu):= \inf_{\pi\in \C(\mu,\nu)} \int_{\C_{T,\BB}\times \C_{T,\BB}} (1\land \|\xi-\eta\|_{T,\BB}) \pi(\d\xi,\d\eta).$$
Then  (1)-(4) and $\H\hookrightarrow\BB$ imply that for any $N\ge 1$ there exists a constant $C_N>0$ such that  for all $\eta\in H^{s+1}$,
\beg{align*} &\|b(t,\xi,\mu)- b(t,\eta,\nu)\|_{\H}
+ \|\sigma(t,\xi,\mu)- \sigma(t,\eta,\nu)\|_{\Lhs_2(\H; \H)}\le C_N\left( \|\xi-\eta\|_{T,\H}+ \W_{1,\BB}(\mu,\nu)\right).
\end{align*}
Therefore, \ref{A1} holds.

Next,  let $C= \sup_{(t,r,z)\in [0,T]\times [0,\infty)\times\R^m}  \|\si_0(t,r,z)\|_{\Lhs_2(\H;\H)}^2$. We have
\begin{align*}
&V'(\|\xi(t)\|_\H^2) \left\{ 2 K_0 \|\xi(t)\|_{\BB}\|\xi(t)\|^2_{\H}+ 2\big\< b(t,\xi_t,\mu_t), \xi(t)\big\>_{\H}+ \|\si(t,\xi_t,\mu_t)\|_{\Lhs_2(\U;\H)}^2 \right\}\\
&\le \ff{2K_0 \|\xi(t)\|_\BB\|\xi(t)\|_{\H}^2+ \ff {5 \bb^2} 4 (1+ \|\xi_t\|_{T,\BB}^\aa)^2\|\xi(t)\|_\H^2 +5C}{1+\|\xi(t)\|_\H^2}\\
&\le \ff {\|\xi(t)\|_\H^2}{1+\|\xi(t)\|_\H^2} \left\{C_1(1+\|\xi_t\|_{T,\BB}^{2\aa}) +\ff {5\bb^2} 4 (1+ \|\xi_t\|_{T,\BB}^\aa)^2\right\}
\end{align*}
for some constant $C_1>0$, and on the other hand,
\beg{align*} &2 V''(\|\xi(t)\|_{\H}^2) \|\si(t,\xi_t,\mu_t)^*\xi(t)\|_\U^2\le   - \ff{2 \|\xi(t)\|_\H^4}{ (1+\|\xi(t)\|_\H^2)^2} \left\{\ff {3 \bb^2}4 (1+\|\xi_t\|_{T,\BB}^\aa)^2  - 4 C\right\}\\
&\ff{\{V'(\|\xi(t)\|_{\H}^2)\|\si(t,\xi_t,\mu_t)^*\xi(t)\|_\U\}^2}{ 1+V(\|\xi(t)\|_{\H}^2)}  \le \ff{\|\xi(t)\|_\H^4}{ (1+\|\xi(t)\|_\H^2)^2} \left\{ \bb^2 (1+\|\xi_t\|_{T,\BB}^\aa)^2  +2 C\right\}.\end{align*}
Therefore, when $\bb>2\ss{C_1}$, \ref{B2} holds for some constants $K_1,K_2>0$.

Finally, let   $m=1, F_b(\xi)=F_\si(\xi)= \|\xi\|_{T, \BB}\land R$.  It suffices to verify \ref{B3} for $N\ge R$. In this case, by the formulation of $b,\si$ and conditions (1)-(4),   for any $N\ge R$,  there exists a constant $C_N>0$ such that
\begin{equation}\label{ERR}
\beg{split}
&\|b(t,\xi,\mu)- b(t,\eta,\nu)\|_{\BB}+ \|\si(t,\xi,\mu)- \si(t,\eta,\nu) \|_{\Lhs_2(\H;\BB)}\\
&\le C_N\left( \|\xi-\eta\|_{T,\BB}+ |\mu_t(\|\cdot\|_{T,\BB}\land R)- \nu_t(\|\cdot\|_{T,\BB}\land R) |\right).
\end{split}
\end{equation}
Denote
$$\|\xi-\eta\|_{\tau_N}=\sup_{t\in [0,T\land\tau_N^\xi\land\tau_N^\eta]}\|\xi(t)-\eta(t)\|_\BB.$$
When $N\ge R$ we have
$$\left|\|\xi_t\|_{T,\BB}\land R- \|\xi_t\|_{T,\BB}\land R\right| \beg{cases} \le  \|\xi_t-\eta_t\|_{T,\BB}= \|\xi-\eta\|_{\tau_N}, &\text{if}\ \tau_N^\xi\land\tau_N^\eta>t,\\
= R- \|\eta_t\|_{T,\BB}\land R \le  \|\xi-\eta\|_{\tau_N}, &\text{if}\ \tau_N^\xi\le t,  \tau_N^\eta>t\\
= R- \|\xi_t\|_{T,\BB}\land R \le  \|\xi-\eta\|_{\tau_N}, &\text{if}\ \tau_N^\xi > t,  \tau_N^\eta\le t\\
= 0 \le  \|\xi-\eta\|_{\tau_N}, &\text{if}\ \tau_N^\xi\lor \tau_N^\eta \le t. \end{cases} $$
Consequently,
$$ |\mu_t(\|\cdot\|_{T,\BB}\land R)- \nu_t(\|\cdot\|_{T,\BB}\land R)|\le \inf_{\pi\in\C(\mu_t,\nu_t)} \int_{\C_{T,\BB}\times\C_{T,\BB}} \|\xi-\eta\|_{\tau_N}\d\pi
\le \W_{2,\BB,N}(\mu_t,\nu_t),$$
so that \eqref{ERR} implies \ref{B3} for $K=0$.
\end{proof}

\begin{Example}\label{Exm SRCH}  Now we consider a family of stochastic models which are more physical relevant.  Let $s,s'$ be in assumption \ref{Assum-B} with $d=1$ and take $\mathbb U= H^s$.  We focus on the following PDE
		\begin{equation}\label{RCH}
\partial_tu+ u\partial_xu+(1-\partial^2_{xx})^{-1}\partial_x\left(a_0u+a_1 u^2+a_2(\partial_xu)^2+a_3 u^3 +a_4 u^4\right)=0,
	\end{equation}
	where $a_i$  ($i=0,1,2,3,4$) are some constants.
	Before we consider their stochastic versions, we briefly recall some background of \eqref{RCH}.
	Due to the abundance of literature on \eqref{RCH}, here we only mention a few related results.
	 If $a_1=1$, $a_2=\frac12$ and $a_0=a_3=a_4=0$,
	\eqref{RCH} becomes the Camassa--Holm equation
	\begin{equation}
\partial_tu+u\partial_xu+(1-\partial_{xx}^2)^{-1}\partial_x\left(u^2+\frac12 (\partial_xu)^2\right)=0.\label{CH}
	\end{equation}
	Equation \eqref{CH}   models the unidirectional propagation of shallow water waves over a flat bottom and it appeared initially in the context of hereditary symmetries studied by Fuchssteiner and Fokas \cite{FF-1981-PhyD} as a bi-Hamiltonian generalization of KdV equation. Later, Camassa and Holm \cite{CH-1993-PRL} derived it by approximating directly in the Hamiltonian for Euler equations in the shallow water regime.
	It is well known that \eqref{CH}
	exhibits both phenomena of (peaked) soliton interaction and wave-breaking.
	When $a_1=\frac{b}{2}$, $a_2=\frac{3-b}{2}$ with $b\in\R$ and $a_0=a_3=a_4=0$,
	\eqref{RCH} reduces to the so-called  $b$-family equations, cf. \cite{HS-2004-PhyA,CL-2009-ARMA},
	\begin{equation}
	\partial_tu+u\partial_xu+(1-\partial_{xx}^2)^{-1}\partial_x\left(\frac{b}{2}u^{2}+\frac{3-b}{2}(\partial_xu)^2\right)=0.\label{b family}
	\end{equation}
When $a_0\in\R$,  $a_1=1$ , $a_2=\frac12$ and $a_3=a_4=0$, \eqref{RCH} is a dispersive  evolution equation derived by Dullin et al.~in \cite{Dullin-Gottwald-Holm-2001-PRL}   as a model  governing planar  solutions  to Euler's equations in the shallow--water regime.  Finally, when $a_i$ ($i=0,1,2,3,4$) are suitably chosen, \eqref{RCH} becomes the recently derived rotation-Camassa--Holm equation describing the motion of the fluid with the Coriolis effect from the incompressible shallow water in the equatorial region, cf. \cite[equation (4.9)]{GLS-2019-JMFM}. In this case, $a_3\neq0$ and $a_4\neq0$ so that the equation has a cubic and quartic nonlinearities.
	
	For this family of PDEs, if distribution-path dependent noise is involved, which can be explained as the weakly random dissipation, cf. \cite{T-18-SIMA}, we consider
	\begin{equation}\label{SRCH}
\d u(t)+\left[u\partial_xu+(1-\partial^2_{xx})^{-1}\partial_x\left(a_0u+a_1 u^2+a_2 (\partial_xu)^2+a_3 u^3 +a_4 u^4\right)\right](t)\d t=\si(t,u_t,\L_{u_t}) \d W(t),
\end{equation}
where
$$\si(t,u,\mu)=\bb  (1+\|u\|_{T,H^{s'}})^\aa\<u(t),\cdot\>_{H^{s}}\cdot v+\si_0(t, \|u\|_{H^{s'}},\mu(F_\si)),$$
and $v\in H^s$ is a fixed element such that $\|v\|_{H^{s}}=1$  and $\si_0$ satisfies  condition (4) with $m=1$ as in Example \ref{Exm Abs}. 
Let
$$F(u)=(1-\partial^2_{xx})^{-1}\partial_x\left(a_0u+a_1 u^2+a_2 (\partial_xu)^2+a_3 u^3 +a_4 u^4\right).$$
It is easy to show that there is a constant $C>0$ such that 
$$	\|F(u)\|_{H^s}\leq C\left(|a_0|+(|a_1|+|a_2|)\|u\|_{W^{1,\infty}} +|a_3|\|u\|^2_{W^{1,\infty}}+ |a_4|\|u\|^3_{W^{1,\infty}}\right)\|u\|_{H^s},$$
and
$$
\|F(u)-F(v)\|_{H^{s'}}
\leq C
\left[|a_0|+(|a_1|+|a_2|)I_{s}(u,v)+|a_3| I_{s}^2(u,v)+|a_4| I_{s}^3(u,v)\right]
\|u-v\|_{H^s}$$
with $I_s(u,v)=\|u\|_{H^{s}}+\|v\|_{H^{s}}$. Since $H^{s'}\hookrightarrow W^{1,\infty}$, $F(\cdot)$ satisfies the the  estimates for drift part as  in \ref{B1} and \ref{B3}.
Going along the lines as
in the proof of Example \ref{Exm Abs} with minor modification, we can see that if $\beta>1$ is large enough and
	\begin{equation*}
\alpha
\begin{cases}
\ge3/2, & {\rm if}  \ a_4\neq0,\ a_0,a_1,a_2,a_3\in\R\ \text{(with Coriolis effect)},\\
\ge1, &  {\rm if} \  a_4=0,\ a_3\neq 0,\ a_0,a_1,a_2\in\R,\\
\ge1/2, &  {\rm if} \ a_3=a_4=0,\ a_1\neq0,\ a_2\neq0,\ a_0\in\R\ \text{(without Coriolis effect)},\\
\end{cases}
\end{equation*}
then for any $u(0)\in L^2(\OO\to H^s,\F_0,\P)$,   \eqref{SRCH} has a unique solution with continuous path in $H^s$ and  $$\E\left[\log(1+\|u_T\|_{T,H^s}^2)\right]<\infty.$$
Therefore, in contrast to the  deterministic case where wave-breaking phenomenon may occur in finite time, see \cite{CE-1998-Acta,CE-1998-CPAM,ZLM-20-AMPA},
the blow-up  is prevented   when the growth of the noise coefficient  in \eqref{SRCH}  is faster enough.
\end{Example}

\

The remainder of the paper is organized as follows. In Section 2, we consider the regular case where $B=0$. Then we prove Theorem \ref{T1} and Theorem \ref{T-TE} in Section 3
and Section 4 respectively.

\section{Regular case: $B=0$}\label{Section:B=0}

We consider the following distribution-path dependent SPDE:
\begin{equation}
\label{E0}
\d X(t) =  b(t,X_t,\L_{X_t})\d t + \si(t,X_t,\L_{X_t}) \d W(t),\ \ X(0)=X_0,\ \ t\in [0,T].
\end{equation}
Recall that
$$\W_{2,\H} (\mu,\nu):= \inf_{\pi\in \C(\mu,\nu)} \bigg(\int_{\C_{T,\H}^w\times\C_{T,\H}^w} \|\xi-\eta\|_{T,\H}^2\,\pi(\d\xi,\d\eta)\bigg)^{\ff 1 2},\ \ \mu,\nu\in \scr P_{T,\H}^w.$$
Then assumption \ref{Assum-A}  for $B=0$  implies the following assumption \ref{Assum-C}:

\begin{Assumptions}\label{Assum-C}
	 With the same notation as in \eqref{Assum-notation}, we assume the following,  for some Hilbert space $\BB$ with dense and compact embedding $\H\hookrightarrow\hookrightarrow \BB$:
\begin{enumerate}[label={ $(C_\arabic*)$}]
	\item\label{C1}     For any   $N\ge 1$, there exists  a constant $C_N>0$   such that   for any $\xi,\eta\in \C_{T,\H,N}$ and $\mu,\nu\in \scr P^V_{T,\H}$, we have that $\P$-a.s. for $t\in [0,T]$,
	\begin{align*}
	&\|b(t,\xi_t, \mu_t)\|_\H+\|\si(t,\xi_t,\mu_t)\|_{\Lhs_2(\U;\H)}\le C_N,
	\end{align*}
	\begin{align*}
	&\|b(t,\xi_t, \mu_t)- b(t,\eta_t,\nu_t)\|_\H +\|\si(t,\xi_t,\mu_t)- \si(t,\eta_t,\nu_t)\|_{\Lhs_2(\U;\H)}
	\le C_{N}\left\{\|\xi_t-\eta_t\|_{T,\H}+\W_{2,\BB}(\mu_t,\nu_t) \right\}.\end{align*}

	\item\label{C2} There exists a dense subset $\H_0\subset\H$ such that
for any bounded sequence $\{(\xi^n,\mu^n)\}_{n\ge 1}\subset \C_{T,\H}\times\scr P_{T,\H}^V$ with $\|\xi^n-\xi\|_{T,\H}\to 0$ and
	$\mu^n\to \mu$ weakly in $\scr P_{T,\BB}$ as $n\to\infty$,   we have
	$$\lim_{n\to\infty}   \big\{|\<b(t,\xi^n, \mu_t^n)- b(t,\xi,\mu_t),\eta\>_\H +\|\{\si(t,\xi^n, \mu_t^n)- \si(t,\xi,\mu_t)\}^*\eta\|_{\U}\big\}=0,\ \ \eta\in \H_0.$$
	
	\item\label{C3}    There exist  constants $K_1,K_2>0$ such that for any $\mu\in \scr P_{T,\H}$, $t\in [0,T] $ and $\xi\in{\C_{T,H}}$,
	\begin{align*}
	&V'(\|\xi(t)\|_\H^2) \left\{  2\< b(t,\xi_t,\mu_t), \xi(t)\>_{\H}+ \|\si(t,\xi_t,\mu)\|_{\Lhs_2(\U;\H)}^2\right\}\\
	&+2 V''(\|\xi(t)\|_\H^2) \|\si(t,\xi_t,\mu_t)^*\xi(t)\|_\U^2
	\le K_1-K_2\ff{\{V'(\|\xi(t)\|_\H^2)\|\si(t,\xi_t,\mu_t)^*\xi(t)\|_\U\}^2}{ 1+V(\|\xi(t)\|_\H^2)}.
	\end{align*}
	
	\item\label{C4}   There exist constants $K,\vv>0$, an increasing  map $C_\cdot:\mathbb N\to (0,\infty)$,such that
  for any $\xi, \eta\in{\C_{T,\H, N}} $
	\begin{align*}
	&\|b(t,\xi_t, \mu_t)- b(t,\eta_t,\nu_t)\|_\BB+\|\si(t,\xi_t,\mu_t- \si(t,\eta_t,\nu_t)\|_{\Lhs_2(\U;\BB)}\\
	&\le C_{N}\left\{\|\xi_t-\eta_t\|_{\BB}+
	\W_{2,\BB,N}\left(\mu_t,\nu_t\right)+K\e^{-\vv C_N}\left( 1\land \W_{2,\BB}(\mu_t,\nu_t)\right)\right\},\ \ t\in [0,T].\end{align*}
	\end{enumerate}

\end{Assumptions}

The main result of this section is the following.

\begin{prp}\label{PR1}    Assume \ref{C1}--\ref{C3}.  For any $T>0$ and $X_0\in L^2(\OO\to\H,\F_0,\P)$,
	\eqref{E0} has a  solution $X\in C([0,T];\H)$ and satisfies
	\begin{equation}\label{E0 estimate}
	\E\left[ V(\|X_T\|_{T,\H}^2) \right]
	\le  2K_1T +1+ \ff{64}{K_2}\left( K_1T+\E\left[V(\|X_0\|_\H^2)\right]\right)<\infty. \end{equation}
	Moreover, if  \ref{C4} holds, then the solution   is unique.
\end{prp}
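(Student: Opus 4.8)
The plan is to prove existence via a mean-field fixed point iteration combined with a Galerkin-type approximation, and then derive uniqueness from a Gronwall argument in the weaker norm $\|\cdot\|_\BB$. For existence, I would first freeze the law: given $\mu\in\scr P^V_{T,\H}$, consider the auxiliary path-dependent SPDE $\d X(t)=b(t,X_t,\mu_t)\,\d t+\si(t,X_t,\mu_t)\,\d W(t)$ with $X(0)=X_0$. Because of the local Lipschitz bound in \ref{C1} this equation is not immediately covered by standard theory, so I would introduce cutoffs $b_N,\si_N$ obtained by stopping the path at $\tau_N^\xi$ (as in \eqref{TXN}), solve the globally Lipschitz truncated equations by Picard iteration in $L^2(\OO;\C_{T,\H})$, and then use the Lyapunov condition \ref{C3} together with It\^o's formula applied to $V(\|X(t)\|_\H^2)$ to get an a priori bound that is uniform in $N$ and in $\mu$ (over locally bounded families). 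The crucial computation is that \ref{C3} makes the drift of $V(\|X(t)\|_\H^2)$ bounded by $K_1$ minus a nonnegative term controlling the quadratic variation, so a supermartingale/Doob argument yields $\E[V(\|X_T\|_{T,\H}^2)]\le 2K_1T+1+\tfrac{64}{K_2}(K_1T+\E[V(\|X(0)\|_\H^2)])$; this shows $\tau_N\uparrow\infty$ a.s., hence the truncations are eventually inactive and we obtain a genuine global solution $X=\Phi(\mu)$ with the stated bound.

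Next I would set up the fixed point for the law. Let $\scr A$ be the set of $\mu\in\scr P^V_{T,\H}$ with $\|\mu\|_V\le R$, where $R$ is the right-hand side of \eqref{E0 estimate}; by the previous step $\Phi$ maps $\scr A$ into itself. The map $\mu\mapsto\L_{\Phi(\mu)}$ is then shown to have a fixed point: using \ref{C1}, a Gronwall estimate in $\W_{2,\H}$ gives, for two frozen laws $\mu,\nu$ supported in $\C^w_{T,\H,N}$, a bound $\W_{2,\H}(\L_{\Phi(\mu)},\L_{\Phi(\nu)})^2\le C(N)\int_0^T\W_{2,\H}(\mu_s,\nu_s)^2\,\d s$, which yields contraction on small time intervals and thus a local-in-time fixed point; iterating over $[0,T]$ using the uniform $V$-bound to restart gives a global one. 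The weak-continuity hypothesis \ref{C2} is used to pass to the limit in the fixed point / approximation and to identify the limit as a genuine solution (it guarantees that the drift and diffusion evaluated along an $\H$-convergent approximating sequence converge weakly, so the stochastic-integral and Bochner-integral terms converge). Time continuity in $\H$ follows from the fact that $b(t,X_t,\L_{X_t})\in\H$, $\si\in\L_2(\U;\H)$, and the It\^o-isometry estimate on the martingale part; no extra condition like \ref{A4} is needed here since $B=0$.

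For uniqueness under \ref{C4}: suppose $X,Y$ are two solutions with the same initial condition. The obstruction is that the distribution is a global object, so a purely local Lipschitz bound in $\H$ is not enough — this is exactly why \ref{C4} is phrased with the localized Wasserstein distance $\W_{2,\BB,N}$ plus the small coefficient $K\e^{-\vv C_N}(1\wedge\W_{2,\BB})$. I would apply It\^o's formula to $\|X(t\wedge\tau)-Y(t\wedge\tau)\|_\BB^2$, where $\tau=\tau_N^X\wedge\tau_N^Y$; using \ref{C4} the drift is bounded by $C_N\|X-Y\|_\BB^2+C_N\W_{2,\BB,N}(\L_{X_t},\L_{Y_t})^2+C_NK^2\e^{-2\vv C_N}(1\wedge\W_{2,\BB}(\L_{X_t},\L_{Y_t}))^2$, and by definition $\W_{2,\BB,N}(\L_{X_t},\L_{Y_t})^2\le\E\|X_{t\wedge\tau}-Y_{t\wedge\tau}\|_{T,\BB}^2$. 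Writing $g_N(t)=\E\sup_{r\le t}\|X(r\wedge\tau)-Y(r\wedge\tau)\|_\BB^2$, the Burkholder–Davis–Gundy inequality and Gronwall give $g_N(T)\le C_N'K^2\e^{-2\vv C_N}T\,e^{C_N'T}$; choosing $\vv$ in \ref{C4} so that $C_N'e^{C_N'T}\e^{-2\vv C_N}$ stays bounded (the exponential $\e^{-\vv C_N}$ is designed precisely to beat $\e^{C_N'T}$ for the relevant range of $N$) forces $\W_{2,\BB}(\L_{X_t},\L_{Y_t})$, and hence $g_N(T)$, to vanish; then $X=Y$ up to $\tau_N$ for every $N$, and letting $N\to\infty$ (using $\tau_N\uparrow\infty$, which holds by the a priori bound \eqref{E0 estimate}) gives pathwise uniqueness. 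I expect the main difficulty to be the bookkeeping in this last step: making the constants in the Gronwall/BDG chain explicit enough that the built-in exponential smallness in \ref{C4} genuinely closes the loop, rather than just the routine verification that the truncated equations are well posed.
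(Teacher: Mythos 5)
For existence you propose freezing the law, solving a path-dependent SPDE by Picard iteration under truncation, and then running a fixed-point argument for the law map $\mu\mapsto\L_{\Phi(\mu)}$ in $\W_{2,\H}$. The first step, including the use of \ref{C3} to obtain the a priori $V$-estimate and to pass from the truncated to the global equation, is essentially the paper's Lemma \ref{L0} and is fine. The gap is in the fixed-point step. The claimed contraction $\W_{2,\H}(\L_{\Phi(\mu)},\L_{\Phi(\nu)})^2\le C(N)\int_0^T\W_{2,\H}(\mu_s,\nu_s)^2\,\d s$ would require the Lipschitz bound of \ref{C1} to apply along the solution paths $\Phi(\mu)_t,\Phi(\nu)_t$; but \ref{C1} holds only for $\xi,\eta\in\C_{T,\H,N}$, i.e.\ only up to the stopping time $\tau_N^{\Phi(\mu)}\wedge\tau_N^{\Phi(\nu)}$. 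Restricting $\mu,\nu$ to be supported in $\C^w_{T,\H,N}$ does not help, because that set is not invariant under $\mu\mapsto\L_{\Phi(\mu)}$, and after the It\^o/BDG/Gronwall chain you are left with a tail term $\E\big[\mathbf 1_{\{\tau_N\le T\}}\|\Phi(\mu)_T-\Phi(\nu)_T\|_{T,\H}^2\big]$ that you have no way to send to zero: the Lyapunov bound gives $\P(\tau_N\le T)\lesssim 1/V(N)$, which does not control a second moment that may itself be large. The paper avoids this by truncating the law along with the path (via $\mu\circ\phi_n^{-1}$), solving the resulting globally Lipschitz McKean--Vlasov equation \eqref{EN} by the Picard iteration of Lemma \ref{L1} (where path and law are updated simultaneously under condition \eqref{KK}), and then, crucially, invoking the compact embedding $\H\hookrightarrow\hookrightarrow\BB$ to obtain tightness of $\{\L_{X^n_{T\wedge\tau^n}}\}$ in $\scr P_{T,\BB}$ and a Cauchy-in-probability argument in $\C_{T,\H}$ up to the stopping times $\tau_N^{k,l}$, with \ref{C2} used to identify the limit. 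Your proposal never uses the compactness $\H\hookrightarrow\hookrightarrow\BB$; that is a strong signal that the truncation cannot be removed by a naive fixed point in $\W_{2,\H}$.

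Your uniqueness argument matches the paper's in substance, with one small mis-statement: $\vv$ in \ref{C4} is a fixed constant, not a parameter you get to tune, so ``choosing $\vv$ so that $C_N'\e^{C_N'T}\e^{-2\vv C_N}$ stays bounded'' should read: for $s<t_0:=\vv/C$ (with $C$ coming out of the It\^o/BDG/Gronwall chain) one has $C\,C_n s<\vv C_n$, hence $C_n\e^{-C_n(\vv-Cs)}\to0$ as $n\to\infty$, and Fatou plus Gronwall force $Z\equiv 0$ on $[0,t_0]$. Since $t_0$ does not depend on the initial data, one then restarts from $t_0$ and iterates in time finitely many steps to cover $[0,T]$; you should state this time-iteration explicitly, as the small-time smallness is what closes the loop. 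Your identification of the roles of $\W_{2,\BB,N}$ and of the factor $K\e^{-\vv C_N}$, and your observation that $\H$-continuity needs no \ref{A4}-type hypothesis when $B=0$, are otherwise correct.
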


To prove this result, we first consider the global monotone situation, and then extend to the local case.

\begin{lem}\label{L1} Let $b(t,\xi,\mu)$ and $\si(t,\xi,\mu)$ be  continuous in $(\xi,\mu)\in \C_{T,\H}\times\scr P_{T,\H}$.  If there exists a positive random variable $\gg$ with $\E[\gg]<\infty$ and a constant $K>0$, such that for any ${\C_{T,\H}}$-valued random variables $\xi$ and $\eta$ with $\xi(0)=\eta(0)$, we have $\P$-a.s.
	\begin{equation}\label{KK}
	\begin{split} & 2\< b(t,\xi_t,\mu_t), \xi(t)\>_\H
	+\|\si(t, \xi_t,\mu_t)\|_{\Lhs_2(\U;\H)}^2 \le K\left\{\gg+\|\xi_t\|_{T,\H}^2+\mu_t(\|\cdot\|_{T, \H}^2)\right\},\\
	&2\<b(t,\xi_t,\mu_t)-b(t, \eta_t,\nu_t),\xi(t)-\eta(t)\>_\H  \le K \left\{\|\xi_t-\eta_t\|_{T, \H}^2+\W_{2,\H}(\mu_t,\nu_t)^2 \right\},\\
	& \|\si(t,\xi_t,\mu_t)-\si(t, \eta_t,\nu_t)\|_{\Lhs_2(\U;\H)}^2 \le K \left\{\|\xi_t-\eta_t\|_{T, \H}^2+\W_{2,\H}^2(\mu_t,\nu_t)^2\right\},\ \ t\in [0,T],\ \mu,\nu\in \scr P_{T,\H}^w.
	\end{split}
	\end{equation}
	Then   for any  $X_0\in L^2(\OO\to\H,\F_0,\P)$,   \eqref{E0} has a  unique  solution which is continuous in $\H$.
\end{lem}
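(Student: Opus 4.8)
The plan is to read \eqref{E0} as a McKean--Vlasov equation and solve it by a fixed-point argument over laws on path space, using the classical theory of monotone SPDEs for the decoupled equation and a Wasserstein contraction for the law.

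\emph{Decoupled equation.} For a fixed reference law $\mu\in\scr P_{T,\H}$ with $\|\mu\|_2:=\mu(\|\cdot\|_{T,\H}^2)<\infty$, freeze the measure slot and consider
$$\d X^\mu(t)=b(t,X^\mu_t,\mu_t)\,\d t+\si(t,X^\mu_t,\mu_t)\,\d W(t),\qquad X^\mu(0)=X(0).$$
By \eqref{KK} its coefficients are continuous in the path variable and obey a coercivity bound, a one-sided (monotonicity) bound for the drift and a Lipschitz bound for the diffusion, all with constant $K$ and an additive term controlled by $\|\mu\|_2$. Since $B=0$, no unbounded operator is present: this is a path-dependent It\^o equation on the Hilbert space $\H$ with monotone drift and Lipschitz noise, and I would obtain its unique $C([0,T];\H)$-valued solution by the standard monotonicity machinery (for instance a Euler or Yosida regularization of the drift, uniqueness from the monotonicity bound and a priori estimates from coercivity). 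Applying It\^o's formula to $\|X^\mu(t)\|_\H^2$, then coercivity, the Burkholder--Davis--Gundy inequality and a two-step Gronwall argument (first without, then with the supremum inside), gives
$$\E\sup_{t\in[0,T]}\|X^\mu(t)\|_\H^2\le C\big(1+\|\mu\|_2\big),\qquad C=C\big(K,T,\E\|X(0)\|_\H^2,\E[\gg]\big).$$
Hence for $M$ large enough (depending only on the data) the map $\Phi:\mu\mapsto\L_{X^\mu}$ sends $\scr S_M:=\{\mu\in\scr P_{T,\H}:\|\mu\|_2\le M\}$ into itself, and $(\scr S_M,\W_{2,\H})$ is a complete metric space.

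\emph{Contraction.} For $\mu,\nu\in\scr S_M$ apply It\^o to $\|X^\mu(t)-X^\nu(t)\|_\H^2$; the monotonicity bound for $b$ and the Lipschitz bound for $\si$, followed by Burkholder--Davis--Gundy and absorption of the martingale part, yield
$$\E\sup_{s\le t}\|X^\mu(s)-X^\nu(s)\|_\H^2\le C\int_0^t\Big(\E\sup_{r\le s}\|X^\mu(r)-X^\nu(r)\|_\H^2+\W_{2,\H}(\mu_s,\nu_s)^2\Big)\,\d s.$$
Because $(\pi_s,\pi_s)$ pushes any coupling of $\mu,\nu$ to a coupling of $\mu_s,\nu_s$ of no larger cost, $\W_{2,\H}(\mu_s,\nu_s)\le\W_{2,\H}(\mu,\nu)$; rather than cutting $[0,T]$ into short pieces I would run this Gronwall estimate against the equivalent complete metric $\rho_\lambda(\mu,\nu):=\big(\sup_{t\le T}\e^{-\lambda t}\W_{2,\H}(\mu_t,\nu_t)^2\big)^{1/2}$ on $\scr S_M$, using $\W_{2,\H}(\mu_s,\nu_s)^2\le\e^{\lambda s}\rho_\lambda(\mu,\nu)^2$ and $\W_{2,\H}(\Phi(\mu)_t,\Phi(\nu)_t)^2\le\E\sup_{s\le t}\|X^\mu(s)-X^\nu(s)\|_\H^2$. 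This produces $\rho_\lambda(\Phi(\mu),\Phi(\nu))^2\le\tfrac{C\e^{CT}}{\lambda}\rho_\lambda(\mu,\nu)^2$, a strict contraction once $\lambda$ is large; Banach's theorem then gives a unique $\bar\mu\in\scr S_M$ with $\Phi(\bar\mu)=\bar\mu$, and $X:=X^{\bar\mu}$ solves \eqref{E0} and is continuous in $\H$.

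\emph{Uniqueness and main obstacle.} If $X,Y$ both solve \eqref{E0} with the common initial value, the a priori bound shows $\L_X,\L_Y\in\scr S_M$ for some $M$, whence $X=X^{\L_X}$, $Y=X^{\L_Y}$; inserting $\W_{2,\H}(\L_{X_s},\L_{Y_s})^2\le\E\sup_{r\le s}\|X(r)-Y(r)\|_\H^2$ into the estimate of the contraction step gives a closed Gronwall inequality for $t\mapsto\E\sup_{s\le t}\|X(s)-Y(s)\|_\H^2$, forcing it to vanish. The main difficulty is the decoupled equation: since $b$ is only monotone --- not Lipschitz --- and genuinely history-dependent, one must build an approximation argument rather than iterate, and justify It\^o's formula for $\|X^\mu(t)\|_\H^2$ with path-dependent coefficients; the remaining steps are routine Gronwall and Wasserstein estimates.
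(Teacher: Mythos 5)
Your proposal is essentially the paper's proof in Banach-fixed-point clothing: the paper runs the Picard iteration $X^{(n)}=X^{\mu^{(n-1)}}$ with $\mu^{(n-1)}=\L_{X^{(n-1)}_T}$ and shows Cauchy-ness of $\{X^{(n)}_T\}$ in a $\lambda$-weighted $L^2$-sup norm, which is the same contraction you phrase via $\rho_\lambda$ on laws, and it likewise defers the solvability of the frozen-measure, path-dependent equation to the procedure of \cite{W18} rather than spelling it out. You correctly identify the decoupled, path-dependent monotone SPDE as the nontrivial ingredient; the paper glosses over exactly that step in the same way.
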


\begin{proof}  By \eqref{KK},  the uniqueness follows from   It\^o's formula and Gr\"{o}nwall's inequality.  Below we only prove the existence by using the procedure as in \cite{W18}.

	Let $X^0(t)\equiv X_0$ and $\mu^{(0)}_t= \L_{X^0_t}.$  If for some $n\ge 1$ we have a  continuous adapted process $X^{(n-1)}(t)$ on $\H$ with $\E[\|X^{(n-1)}_T\|_{T,\H}^2]<\infty$,
	let $X^{(n)}(t)$ solve the SDE
	\begin{equation}\label{PO}
	\d X^{(n)}(t)=    b(s, X^{(n)}_s,\mu_s^{(n-1)})\d s+     \si(s, X^{(n)}_s,\mu_s^{(n-1)})\d W(s),\ \ X^{(n)}(0)=X_0,\  t\in [0,T].
	\end{equation}
	By \eqref{KK} and induction,
	 we can construct  a sequence of continuous adapted processes $\{X_T^{(n)}\}_{n\ge 1}$ on $\C_{T,\H}$ with $\sup_{n\ge1}\E[\|X_T^{(n)}\|_{T,\H}^2]<\infty$.
	Below we prove
	  that $\{X_{T}^{(n)}\}_{n\ge 1}$ is a Cauchy sequence in $L^2(\OO\to \C_{T,\H};\P)$, and hence has a limit $X_{T}$ in this space as $n\to\infty$, so that
	due to \eqref{KK} and the continuity of $b(t,\xi,\mu)$ and $\si(t,\xi,\mu)$   in $(\xi,\mu),$ we may let $n\to\infty$ in \eqref{PO} for $t\in [0,T]$ to conclude that $X_{T}$ is a solution of \eqref{E0}.
	
	By \eqref{KK} and It\^o's formula, for $Z^{(n)}(t):=X^{(n)}(t)-X^{(n-1)}(t)$,
$$\|Z^{(n)}(t)\|^2_{\H} \le K\int_0^t\left\{\|Z^{(n)}_s\|^2_{T,\H}+\E\|Z^{(n-1)}_s\|^2_{T,\H}\right\}ds+M(t) $$
where $$M(t):=2\int_0^t\left\<Z^{(n)}(s),\{\si(s,X^{(n)}_s,\mu^{(n-1)}_s)-\si(s,X^{(n-1)}_s,\mu^{(n-2)}_s)\}dW(s)\right\>_{\H}.$$\\
Then for $\lambda>0$,
\begin{equation}\label{DD1}\begin{split}
\e^{-\lambda t}\E\|Z^{(n)}_t\|^2_{T,\H}&\le K \e^{-\lambda t}\int_0^t\left\{\E\|Z^{(n)}_s\|^2_{T,\H}+\E\|Z^{(n-1)}_s\|^2_{T,\H}\right\}\d s+\e^{-\lambda t}\E\left( \sup_{0\le s \le t}M(s)\right)\\
&=:I^{(1)}(t)+I^{(2)}(t),\ \ t\in [0,T].
\end{split}
\end{equation}
We observe that
\begin{equation}\label{DD2}\begin{split}
I^{(1)}(t)&= K\int_0^t \e^{-\lambda (t-s)}\left\{\e^{-\lambda s}\E\|Z^{(n)}_s\|^2_{T,\H}+\e^{-\lambda s}\E\|Z^{(n-1)}_s\|^2_{T,\H}\right\}\d s\\
&\le \frac{K}{\lambda}\sup_{0\le s \le t}\left( \e^{-\lambda s}\E\|Z^{(n)}_s\|^2_{T,\H}\right)+ \frac{K}{\lambda}\sup_{0\le s \le t}\left( \e^{-\lambda s}\E\|Z^{(n-1)}_s\|^2_{T,\H}\right).
\end{split}
\end{equation}
By BDG's inequality, for some constants $c_1, c_2>0$, we have
\begin{equation}\label{DD3}\begin{split}
I^{(2)}(t)&\le c_1\e^{-\lambda t}
\E\left( \int_0^t \|Z^{(n)}(s)\|^2_{\H}\left\{\|Z^{(n)}_s\|^2_{T,\H}+\E\|Z^{(n-1)}_s\|^2_{T,\H}\right\}ds\right)^{\frac{1}{2}}\\
&\le c_1\e^{-\lambda t}\left( \E\|Z^{(n)}_t\|^2_{T,\H}\int_0^t \left\{\E\|Z^{(n)}_s\|^2_{T,\H}+\E\|Z^{(n-1)}_s\|^2_{T,\H}\right\}ds\right)^{\frac{1}{2}}\\
&\le \frac{1}{2}\e^{-\lambda t}\E\|Z^{(n)}_t\|^2_{T,\H}+c_2\int_0^t \e^{-\lambda (t-s)}\left\{\e^{-\lambda s}\E\|Z^{(n)}_s\|^2_{T,\H}+\e^{-\lambda s}\E\|Z^{(n-1)}_s\|^2_{T,\H}\right\}\d s\\
&\le \frac{1}{2}\e^{-\lambda t}\E\|Z^{(n)}_t\|^2_{T,\H}+\frac{c_2}{\lambda}\left\{\sup_{0\le s\le t}\left( \e^{-\lambda s}\E\|Z^{(n)}_s\|^2_{T,\H}\right)+\sup_{0\le s\le t}
\left( \e^{-\lambda s}\E\|Z^{(n-1)}_s\|^2_{T,\H}\right)\right\}.
\end{split}
\end{equation}
Substituting \eqref{DD2} and \eqref{DD3} into \eqref{DD1} yields that for $t\in[0,T]$,
\begin{equation*}
\e^{-\lambda t}\E\|Z^{(n)}_t\|^2_{T,\H}\le \frac{2(K+c_2)}{\lambda}\sup_{0\le s \le t}\left( \e^{-\lambda s}\E\|Z^{(n)}_s\|^2_{T,\H}\right)+\frac{2(K+c_2)}{\lambda}\sup_{0\le s \le t}\left( \e^{-\lambda s}\E\|Z^{(n-1)}_s\|^2_{T,\H}\right),
\end{equation*}
which  implies
\begin{equation*}
\sup_{0\le s \le T}\left( \e^{-\lambda s}\E\|Z^{(n)}_s\|^2_{T,\H}\right)\le \frac{2(K+c_2)}{\lambda}\left(\sup_{0\le s \le T}\left( \e^{-\lambda s}\E\|Z^{(n)}_s\|^2_{T,\H}\right)+\sup_{0\le s \le T}\left( \e^{-\lambda s}\E\|Z^{(n-1)}_s\|^2_{T,\H}\right)\right).
\end{equation*}
Taking $\ll= 6(K+c_2)$, we arrive at
$$
\sup_{0\le s \le T}\left( \e^{-\lambda s}\E\|Z^{(n)}_s\|^2_{T,\H}\right)\le
\frac{2(K+c_2)}{\lambda-2(K+c_2)}\sup_{0\le s \le T}\left( \e^{-\lambda s}\E\|Z^{(n-1)}_s\|^2_{T,\H}\right)= \ff 1 2 \sup_{0\le s \le T}\left( \e^{-\lambda s}\E\|Z^{(n-1)}_s\|^2_{T,\H}\right).
$$ Hence, for any $n\ge 2$ we have
$$\sup_{0\le s \le T}\left( \e^{-\lambda s}\E\|Z^{(n)}_s\|^2_{T,\H}\right)\le
\frac{1}{2^{n-1}}\sup_{0\le s \le T}\left( \e^{-\lambda s}\E\|Z^{(1)}_s\|^2_{T,\H}\right).$$
Therefore,  $\{X^{(n)}_T\}_{n\geq 1}$ is a Cauchy sequence as desired.
\end{proof}

\begin{lem} \label{L0}
Assume \ref{C1}--\ref{C3}.	
	 For any $T>0$, $X(0)\in L^2(\OO\to\H,\F_0,\P)$,     and  any $\mu\in \scr P_{T,\H}^V$, the
	SPDE
	$$\d X^\mu(t) = b(t,X_t^\mu,\mu_t)\d t + \si(t,X_t^\mu,\mu_t) \d W(t),\ \ X^\mu(0)=X_0$$
	has a unique solution $X^{\mu}_T$ satisfying
	\begin{equation}\label{Q3}
	\E\left[ V(\|X^\mu_T\|_{T,\H}^2)\right] \le  2K_1T +1+ \ff{64}{K_2}\left( K_1T+\E[V(\|X_0\|_\H^2)]\right).
	\end{equation}
\end{lem}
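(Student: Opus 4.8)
The plan is to use that, for fixed $\mu$, the equation in Lemma~\ref{L0} is a distribution-\emph{independent} path-dependent SPDE, and to compensate for the merely local Lipschitz behaviour in \ref{C1} by a truncation-and-patching scheme, the patching and the a priori bound \eqref{Q3} both being supplied by the Lyapunov condition \ref{C3}. For $N\ge 1$ I would fix a cutoff $h_N\in C([0,\infty);[0,1])$ with $h_N\equiv 1$ on $[0,N]$, $h_N\equiv 0$ on $[N+1,\infty)$ and $|h_N(r)-h_N(r')|\le|r-r'|$, and set
$$b^{(N)}(t,\xi_t):=h_N(\|\xi_t\|_{T,\H})\,b(t,\xi_t,\mu_t),\qquad \si^{(N)}(t,\xi_t):=h_N(\|\xi_t\|_{T,\H})\,\si(t,\xi_t,\mu_t).$$
Since $\mu\in\scr P_{T,\H}^V$ and $\xi\mapsto\|\xi_t\|_{T,\H}$ is $1$-Lipschitz, \ref{C1} (applied with $\nu=\mu$) shows that $b^{(N)},\si^{(N)}$ are bounded and globally Lipschitz continuous in $\xi$ under $\|\cdot\|_{T,\H}$ with constants depending only on $N$, hence continuous in $\xi$ and satisfying \eqref{KK} with $\gg\equiv 1$; by Lemma~\ref{L1} the SPDE with coefficients $b^{(N)},\si^{(N)}$ and initial value $X(0)$ has a unique solution $X^{\mu,N}\in C([0,T];\H)$.

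Set $\tau_N:=\inf\{t\ge 0:\|X^{\mu,N}(t)\|_\H\ge N\}$. For $t\le\tau_N$ the cutoff is inactive, so $X^{\mu,N}$ solves the original equation on $[0,\tau_N]$. A standard localization argument---comparing $X^{\mu,N}$ and $X^{\mu,M}$ for $M\ge N$ up to the first time either exits the ball of radius $N$, on which both satisfy the same equation with $C_N$-Lipschitz coefficients, via It\^o's formula, Gr\"onwall's inequality and the uniqueness in Lemma~\ref{L1}---shows $X^{\mu,M}\equiv X^{\mu,N}$ on $[0,\tau_N]$, so $\tau_N$ is a.s.\ nondecreasing in $N$; put $\tau_\infty:=\lim_{N\to\infty}\tau_N$. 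On $[0,\tau_N]$, $X^{\mu,N}$ is an $\H$-valued It\^o process with $\H$-valued drift and $\Lhs_2(\U;\H)$-valued diffusion, so It\^o's formula applies to $\|X^{\mu,N}(t)\|_\H^2$ and then to $V$; writing $Y(t):=V(\|X^{\mu,N}(t)\|_\H^2)$ and $M(t):=2\int_0^t V'(\|X^{\mu,N}(s)\|_\H^2)\,\<X^{\mu,N}(s),\si^{(N)}(s,X^{\mu,N}_s)\,\d W(s)\>_\H$, condition \ref{C3} applied on $[0,\tau_N]$ (where $b^{(N)}=b$, $\si^{(N)}=\si$) gives
$$\d Y(t)\le K_1\,\d t-\ff{K_2}{4}\,\ff{\d\<M\>_t}{1+Y(t)}+\d M(t),\qquad t\le\tau_N.$$

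Since $M$ is a martingale, integrating over $[0,t\wedge\tau_N]$ and taking expectations (dropping the nonnegative terms appropriately) gives
\begin{align*}
&\E\big[Y(t\wedge\tau_N)\big]\le\E\big[V(\|X(0)\|_\H^2)\big]+K_1T,\\
&\E\Big[\int_0^{t\wedge\tau_N}\ff{\d\<M\>_s}{1+Y(s)}\Big]\le\ff{4}{K_2}\Big(\E\big[V(\|X(0)\|_\H^2)\big]+K_1T\Big).
\end{align*}
For the supremum I would drop the nonpositive term in the integrated inequality, apply the Burkholder--Davis--Gundy inequality, bound $\<M\>_{t\wedge\tau_N}\le\big(1+\sup_{s\le t\wedge\tau_N}Y(s)\big)\int_0^{t\wedge\tau_N}\d\<M\>_s/(1+Y(s))$ and use Young's inequality to absorb one half of $\E\big[\sup_{s\le t\wedge\tau_N}Y(s)\big]$; feeding in the second estimate above then yields, uniformly in $N$,
$$\E\Big[\sup_{s\le T\wedge\tau_N}V(\|X^{\mu,N}(s)\|_\H^2)\Big]\le 2K_1T+1+\ff{64}{K_2}\Big(K_1T+\E\big[V(\|X(0)\|_\H^2)\big]\Big).$$
If $\tau_N\le T$ then $\|X^{\mu,N}(\tau_N)\|_\H=N$, so $V(N^2)\,\P(\tau_N\le T)$ is bounded by the right-hand side; since $V(N^2)\to\infty$ and $\{\tau_N>T\}$ increases in $N$, $\P(\tau_\infty>T)=1$. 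Thus for a.e.\ $\oo$ there is $N$ with $\tau_N>T$, and by the patching $X^\mu:=X^{\mu,N}|_{[0,T]}$ is a well-defined solution of the equation in Lemma~\ref{L0}, continuous in $\H$; since $V$ is increasing, letting $N\to\infty$ and using Fatou's lemma gives \eqref{Q3}. Uniqueness is standard: for fixed $\mu$ the coefficients are locally Lipschitz in the path by \ref{C1}, so two $\H$-continuous solutions with the same initial datum coincide up to the exit time from every ball (It\^o's formula and Gr\"onwall's inequality), and $\H$-continuity on the compact interval $[0,T]$ rules out explosion.

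The delicate step is the Lyapunov bound: \ref{C3} is not a one-sided linear-growth estimate on the drift of $V(\|X\|_\H^2)$ but controls it only after subtracting a multiple of $\d\<M\>_t/(1+Y(t))$, and since $\<M\>_t$ is comparable to $\big(1+\sup_{s\le t}Y(s)\big)\int_0^t\d\<M\>_s/(1+Y(s))$, one must first extract the $L^1$-bound on $\int\d\<M\>_s/(1+Y(s))$ from the fixed-time estimate before the Burkholder--Davis--Gundy / Young absorption can close the supremum bound. Everything else---the truncation, the patching, and the passage from a uniform Lyapunov bound to non-explosion---is the usual localization machinery.
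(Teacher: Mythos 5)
Your proposal is correct and takes essentially the same approach as the paper. The only surface difference is that you spell out the local-existence step via an explicit Lipschitz cutoff $h_N$ plus Lemma~\ref{L1} and a patching argument, whereas the paper simply appeals to \ref{C1} for a unique maximal solution up to the life time $\tau$; the core estimate is identical --- It\^{o}'s formula with \ref{C3} to get the fixed-time bound $\E[V]+K_2\E\int H\le K_1T+\E[V(\|X(0)\|_\H^2)]$, then BDG together with the factorization $\<M\>\le(1+\sup V)\int \d\<M\>/(1+V)$ and Young's inequality to absorb half of $\E[\sup V]$, yielding the same constant $2K_1T+1+\frac{64}{K_2}(K_1T+\E[V(\|X(0)\|_\H^2)])$, and finally $V(n)\P(\tau_n\le T)\le\text{const}$ to rule out explosion.
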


\begin{proof} By \ref{C1}, we see that this equation has a unique solution up to the life time $\tau.$ Now we prove that $\tau>T$ (i.e. the solution is non-explosive) and \eqref{Q3}.
	To this end, with the convention $\inf \emptyset =\infty$ we set
	\begin{align*}
	&\tau_n=\inf\{t\ge 0: \|X^\mu(t)\|_\H^2\ge n\},\ \ n\ge 1,\\
	& H(t):= \ff{\{V'(\|X^\mu(t)\|_\H^2)\|\si(t,X_t^\mu,\mu_t)^*X(t)\|_\U\}^2}{ 1+V(\|X^\mu(t)\|_\H^2)},\ \ t\in [0,T].\end{align*}
	By \ref{C3} and It\^o's formula, we obtain
	\begin{equation}\label{Q0}
	\d V( \|X^\mu(t)\|_\H^2) \le \{K_1- K_2 H(t)\} +  2 V'(\|X^\mu(t)\|_\H^2)
	\<X^\mu(t), \si(t, X_t^\mu,\mu_t)\d W(t)\>_\H.\end{equation}
	This gives rise to
	\begin{equation}\label{Q4}
	\E[V(\|X^\mu(T\land \tau_n)\|_\H^2) ] + K_2 \E \int_0^{T\land \tau_n} H(t)\d t  \le  K_1 T+ \E [V(\|X_0\|_{\H}^2)]=:C,\ \ n\ge 1.
	\end{equation}
	Then
	$$V(n) \P(\tau_n\le T) \le   \E[V(\|X^\mu(T\land \tau_n)\|_\H^2) ] \le C,\ \ n\ge 1,$$   so that by $\tau \ge \tau_n$ we obtain
	$\P(\tau\le T)\le \ff C{V(n)} \to 0$ as $n\to\infty$. Thus, $\P(\tau>T)=1.$ Moreover, by \eqref{Q0} and BDG inequality, we obtain that for all $n\geq1$,
	\begin{align*}
	\E\left[V(\|X^\mu_{T\land\tau_n}\|^2_{T,\H})\right]
	& \le K_1T + 8 \E\bigg(\int_0^{T\land\tau_n} \{V'(\|X^\mu(t)\|_\H^2)\}^2  \|\si^*(t, X_t^\mu,\mu_t)X^\mu(t)\|_{\U}^2\d t \bigg)^{\ff 1 2}\\
	&= K_1T + 8 \E\bigg(\left(1+ V(\|X^\mu_{T\land\tau_n]}\|_{T,\H}^2)\right)\int_0^{T\land\tau_n}  H(t)\d t \bigg)^{\ff 1 2} \\
	&\le K_1T + \ff 1 2 \E\left[\left(1+ V(\|X^\mu_{T\land\tau_n]}\|_{T,\H}^2)\right)\right] + 32 \E \int_0^T H(t)\d t.
	\end{align*}
	Combining this with \eqref{Q4}, we arrive at
	\beq\label{QQO} \E\left[ V(\|X^\mu_{T\land\tau_n]}\|_{T,\H}^2)\right]
	\le 2 K_1T + 1 + 64 \frac{C}{K_2}=:\dd,\ \ n\ge 1.\end{equation}
	As $C$ does not depend on $n$,   letting $n\to\infty$ and noting \eqref{Q4} give rise to \eqref{Q3}. \end{proof}

\begin{proof}[Proof of Proposition \ref{PR1}]  The  estimate \eqref{E0 estimate} is implied by Lemma \ref{L0} with $\mu_t=\L_{X_t}$ once existence
	has been established.  So, it remains to prove the existence and uniqueness.
	
 	\textbf{(a) Existence.}
	To construct a solution using  Lemma \ref{L1}, we make a localized  approximation of $b$ and $\si$ as follows. For   $\tau_n^\xi$   in \eqref{TXN}, let
	$$\phi_n(\xi)(t):= \xi(t\land\tau_n^\xi),\ \ \xi\in \C_{T,\H},\  n\ge 1, \ t\in [0,T],$$ and define
	$$ b^n(t,\xi,\mu)= b(t, \phi_n(\xi), \mu\circ\phi_n^{-1}),\ \ \si^n(t,\xi,\mu)= \si (t, \phi_n(\xi), \mu\circ\phi_n^{-1}),  \ \ \xi\in \C_{T,\H},\  \ t\in [0,T],\ \mu\in \scr P_{T,\H}.$$
	By \ref{C1}, we see that for each $n\ge 1$, $b^n$ and $\si^n$ satisfy \eqref{KK} for $\gg=1$ and some constant $K$ depending on $n$. Therefore, by Lemma \ref{L1},
	the equation
	\begin{equation}
	\label{EN}    X^n(t)= X(0)+ \int_0^t b^n(s, X_s^n, \L_{X_s^n}) \d s + \int_0^t \si^n(s, X_s^n, \L_{X_s^n}) \d W(s),\ \   t\in [0,T]
	\end{equation}
	has a unique solution. By the definition of $\phi_n$, we have
	\begin{equation}\label{tau^n}
	 \tau^n:= \inf\{t\ge 0: \|X^n(t)\|_\H^2\ge n\}= \tau_n^{X_T^n},\ \
	 \phi_n(X_s^n)= X_{s\land\tau^n}^n,\ \ s\in [0,T], n\ge 1.
	\end{equation} Moreover, for any measurable set $A\subset \C_{T,\H}$, we have
	$$\big\{(\L_{X_s^n})\circ\phi_n^{-1} \big\}(A) = \P\big(X_s^n\in \phi_n^{-1}(A)\big) = \P\big(\phi_n(X_s^n)\in A\big)= \L_{\phi_n(X_s^n)}(A)= \L_{X_{s\land\tau^n}^n}(A),$$
	so that \eqref{EN} reduces to
	\begin{equation}
	\label{EN'}
	X^n(t)= X(0)+ \int_0^t b(s, X_{s\land\tau^n}^n, \L_{X_{s\land\tau^n}^n}) \d s + \int_0^t \si(s,  X_{s\land\tau^n}^n, \L_{X_{s\land\tau^n}^n}) \d W(s),\ \   t\in [0,T].
	\end{equation} 	
So, by \ref{C3} and   applying It\^o's formula to $V(\|X^n(t)\|_\H^2)$ up to time $T\land \tau^n$, as in \eqref{QQO},  we derive
	\begin{equation}\label{W1}
	\E\left[ V(\|X^n_{T\land\tau_n]}\|_{T,\H}^2) \right] \le\dd ,\ \ n\ge 1.\end{equation}
	Consequently, the stopping times
	$$\tau_N^n:=\inf\{t\ge 0: \|X_t^n\|_{T,\H}^2\ge N\},\ \ n\ge N\ge 1$$ satisfy
	\begin{equation}
	\label{P33}
	\P(\tau_N^n<T) \le \ff {\dd} {V(N)},\ \ n\ge N\ge 1.
	\end{equation}
	Next, by \ref{C1} and \eqref{EN}, we find a constant $C_N>0$ such that  for any $n\ge N$,
	\begin{equation}
	\label{W2}
	\E \left[\sup_{s,t\in [0,T], |t-s|\le \vv} \|X^n(t\land \tau_N^n)- X^n(s\land \tau_N^n)\|_\H\right]\le C_N \vv^{\ff 1 3},\ \ 0\le s\le t\le T,\vv\in (0,T).
	\end{equation}
	Indeed, for any $l\ge 1$, by \ref{C1}, \eqref{EN}  and BDG inequality, there exists a constant $C_{N,l}>0$ such that
	$$\E\left[\sup_{t\in [s,(s+\vv)\land T]}  \|X^n(t\land \tau_N^n)- X^n(s\land \tau_N^n)\|_\H^{2l}\right] \le C_{N,l} \vv^l,\ \ n\ge N, s\in [0,T-\vv].$$
	Let  $k\in \mathbb N$  such that $k\vv\in [T,T+\vv)$. We find some constant $c(l)>0$ such that
	\begin{align*} &\E\left[ \sup_{s,t\in [0,T], |t-s|\le \vv} \|X^n(t\land \tau_N^n)- X^n(s\land \tau_N^n)\|_\H^{2l}\right]\\
	&\le c(l) \sum_{i=1}^{k} \E\left[\sup_{t\in [(i-1)\vv,\,(i\vv)\land T]}  \|X^n(t\land \tau_N^n)- X^n(\{(i-1)\vv\}\land \tau_N^n)\|_\H^{2l}\right] \le C_{N,l}(T+\vv)\vv^{l-1},\ \ n\ge N.\end{align*}
	Therefore, by Jensen's inequality, we obtain
	$$\E\left[ \sup_{s,t\in [0,T], |t-s|\le \vv} \|X^n(t\land \tau_N^n)- X^n(s\land \tau_N^n)\|_\H\right]\le
	\left\{ C_{N,l}(T+\vv)\right\}^{\ff 1 {2 l}} \vv^{\ff 1 2-\ff 1 {2l}}, \ \ n\ge N.$$
	Taking $l\ge 1$ such that $\ff 1 2-\ff 1 {2l}\ge \ff 1 3$, we obtain \eqref{W2}.
	Particularly, \eqref{W2} holds true for $n=N$. In this case, $\tau_N^n=\tau_n^n=\tau^n$.
	Due to this and \eqref{W1},  and noting that embedding $\H\hookrightarrow\BB$ is compact, we deduce from  the Arzel\'a-Ascoli type theorem  for measures  that $\{\mu^n:= \L_{X^n_{T \wedge \tau^n}}\}_{n\ge 1}$
	is tight in $\scr P_{T,\BB}$.   By the Prokhorov theorem, for some  subsequence $\{n_k\}_{k\ge 1}$
	we have  $ \mu^{n_k}\to \mu$    weakly  in $\scr P_{T,\BB}$   as $k\to\infty.$    Notice that $\phi_{n}(\xi)=\xi$ for $\xi\in\C_{T,\H,n}$ and define $$\tau_N^{k,j}:=\tau_N^{n_k}\land \tau_N^{n_j}.$$ Then we find
	$$\phi_{n_i} (X_{t\land \tau_N^{k,l}}^{n_j})= X_{t\land \tau_N^{k,l}}^{n_j},\ \ i,j\in \{k,l\},$$ and
		$$ \lim_{k\to\infty}\lim_{l\to\infty}\mu^{n_k}\circ\phi_{n_l}^{-1} = \mu\ \text{weakly  in}\ \scr P_{T,\BB}.$$
		
From the above two properties, \eqref{W1} and \ref{C1},
we find a family of constants $\{\vv_{k,l}: k,l\ge 1\}$ with
	$\vv_{k,l}\to 0$ as $k,l\to \infty$ such that
		\begin{equation}\label{CVV-b} \begin{split}
	&\left\|b\left(t, X^{n_k}_{t\land \tau_N^{k,l}}, \mu_t^{n_k} \right)
	-b\left(t, X_{t\land \tau_N^{k,l}}^{n_l}, \mu_t^{n_l} \right)\right\|_\H \\
	=&\left\|b\left(t, X^{n_k}_{t\land \tau_N^{k,l}}, \mu_t^{n_k}\circ\phi_{n_l}^{-1} \right)
	-b\left(t, X_{t\land \tau_N^{k,l}}^{n_l}, \mu_t^{n_k}\circ\phi_{n_l}^{-1} \right)\right\|_\H \\
	&+\left\|b\left(t, X^{n_l}_{t\land \tau_N^{k,l}}, \mu_t^{n_k}\circ\phi_{n_l}^{-1}\right)
	-b\left(t, X_{t\land \tau_N^{k,l}}^{n_l}, \mu_t^{n_l}\circ\phi_{n_l}^{-1} \right)\right\|_\H \\
	\leq& C_N\big\|X^{n_k}_{s\land \tau_N^{k,l}}
	-X^{n_l}_{s\land \tau_N^{k,l}}\big\|_{T,\H}+C_N\vv_{k,l},\ \ \ l\ge k\ge N.
	\end{split}\end{equation}
Similarly, we also have
	\begin{equation}\label{CVV-sigma}
	\big\|\si(t, X^{n_k}_{t\land \tau_N^{k,l}}, \mu_t^{n_k} )- \si(t, X_{t\land \tau_N^{k,l}}^{n_k}, \mu_t^{n_l} )\big\|_{\Lhs_2(\U;\H)}
	\leq  C_N\big\|X^{n_k}_{s\land \tau_N^{k,l}}
	-X^{n_l}_{s\land \tau_N^{k,l}}\big\|_{T,\H}+C_N\vv_{k,l},\ \ \ l\ge k\ge N.
	\end{equation}
	By \eqref{CVV-b}, \eqref{CVV-sigma},  \ref{C1}, and applying BDG inequality, we find  a constant $C_N>0$   such that
	\begin{align*}\E\left[
	\big\|X^{n_k}_{t\land \tau_N^{k,l}}-X^{n_l}_{t\land \tau_N^{k,l}}\big\|^2_{T,\H}\right]
	\le C_N^2 \int_0^T  \E\left[
	\big\|X^{n_k}_{s\land \tau_N^{k,l}}-X^{n_l}_{s\land \tau_N^{k,l}}\big\|^2_{T,\H}\right]\d s
	+ C_N^2\vv^2_{k,l}T,\ \ t\in [0,T],\ l\ge k\ge N.
	\end{align*}
	Applying Gr\"{o}nwall's inequality with noting that $\vv_{k,l}\to 0$ as  $k,l\to\infty$, we derive
	\begin{equation}\label{A33}
	\lim_{k\to\infty} \sup_{l\ge k}\ \E\left[
	\big\|X^{n_k}_{T\land \tau_N^{k,l}}-X^{n_l}_{T\land \tau_N^{k,l}}\big\|^2_{T,\H}\right]
	\le
		C_N^2\lim_{k\to\infty}\sup_{l\ge k}\ \vv^2_{k,l}T\e^{C_N^2T} =0.
	\end{equation}
	Then we infer from \eqref{P33} that for any $\epsilon>0$,
	\begin{align*}
	&\P\left(   \|X_T^{n_k}-X_T^{n_l}  \|_{T,\H}>\epsilon\right) \\
	\le &\P(\tau^{n_k}_{N}\le T)+ \P(\tau^{n_l}_{N}\le T)
	+ \P\left( \|X_{T\land \tau_{N}^{k,l}} ^{n_k}-X_{T\land \tau_{N}^{k,l}}^{n_l}  \|_{T,\H}>\epsilon\right)\\
	\le &\ff{2\dd}{V(N)}+ \P\left( \|X_{T\land \tau_N^{k,l}}^{n_k}-X_{T\land \tau_N^{k,l}}^{n_l}  \|_{T,\H}>\epsilon\right),\ \ l\ge k\ge N.
	\end{align*}
	Combining this with \eqref{A33}, we  obtain
	$$	\lim_{k\to\infty} \sup_{l\ge k}\P\left(  \|X^{n_k}_T-X^{n_l}_T\|_{T,\H}>\epsilon\right)\le \ff{2\dd}{V(N)},\ \ N\ge 1,\ \epsilon>0.$$
	Letting $N\to\infty$,
	we conclude  that $X_T^{n_k}$ converges in probability to some ${\C_{T,\H}}$-valued  random variable $X_T$. Since for each $n\ge1$, $X_T^n$ is adapted, so is $X_T$.
	Therefore, up to a subsequence $\{\tt n_k\}_{k\ge 1}$, we have $\P$-a.s.
	$$\lim_{n\to\infty}   \|X^{\tt n_k}_T -X_T\|_{T,\H } =0.$$   In particular, $\L_{X_T^{\tt n_k}}\to \L_{X_T}$ weakly in $ \scr P_{T,\H}$, and
$$\tau_N':=\inf\Big\{\sup_{k\ge 1} \|X^{\tt n_k}(t)\|_\H\ge N\Big\}\uparrow\infty\ \text{as}\ N\uparrow\infty.$$
 Since
	$\mu^{\tt n_k}\to \mu$ weakly in $\scr P_{T,\BB}\supset \scr P_{T,\H}$, as is proved above, we have $\L_{X_T}=\mu$.
	Combining this with   \ref{C1}, \ref{C2}   and \eqref{W1},  we may let $k\to\infty$ in \eqref{EN} for $n=\tt n_k$ to conclude that
	$X_T$  satisfies 
$$\<X(t\land\tau_N'), \eta\>_\H= \<X(0), \eta\>_\H+\int_0^{t\land\tau_N'} \<b(s, X_s,\mu_s),\eta\>_\H\d s + \int_0^{t\land\tau_N'} \<\si(s, X_s,\mu_s)\d W(s),\eta\>_\H $$ for any $ t\in [0,T], N\ge 1$ and $ \eta\in \H_0.$
Since $\H_0 $ is dense in $\H$ and $\tau_N'\uparrow\infty$ as $N\uparrow\infty$, this implies that $X_T$
 solves \eqref{E0}.
	
	\textbf{ (b) Uniqueness.}  If $C_N$ is bounded, by letting $N\to\infty $ in \ref{C4} we find a global Lipschitz  condition on the coefficients which, as is well known,  implies the pathwise uniqueness.  So, below we assume $C_N\to\infty$ as $N\to\infty$.

\textbf{(b1)} We first  prove the pathwise uniqueness up to a time $t_0\in (0,T]$.
	    Let $X_T$ and $Y_T$ be two solutions with $X(0)=Y(0)$.  Let
	\begin{align}
	\tau_n=\tau_n^X\land\tau_n^Y= \inf\{t\ge 0: \|X(t)\|_\H\lor\|Y(t)\|_{\H}\ge n\},\ \ n\ge 1.\label{X Y tau}
	\end{align}
	Then $Z_T=X_T-Y_T$ satisfies
	\begin{align*}
	Z(t\wedge \tau_n) =  &\int_0^{t\wedge \tau_n}\left(b(t,X_t,\L_{X_t})-b(t,Y_t,\L_{Y_t})\right)\d t \\
	&+\int_0^{t\wedge \tau_n} \left(\si(t,X_t,\L_{X_t})- \si(t,Y_t,\L_{Y_t})\right) \d W(t)
	\end{align*}
	By  It\^o's formula and BDG's inequality, there exist   constants $c_1,c_2>0$ such that
	\begin{equation}\label{ADD} \beg{split}
	\E\|Z_{\tau_n\wedge s}\|^2_{T,\BB}
	\leq& c_1 \E\int_0^{\tau_n\wedge s}\|b(t,X_t,\L_{X_t})-b(t,Y_t,\L_{Y_t})\|_{\BB}\|Z(t)\|_{\BB}\d t  \\
	&+c_1 \E\left(\int_0^{\tau_n\wedge s}\|\si(t,X_t,\L_{X_t})-\si(t,Y_t,\L_{Y_t})\|^2_{\Lhs_2(\U;\BB)}\|Z(t)\|^2_{\BB}\d t\right)^{\frac12} \\
	&+c_1 \E\int_0^{\tau_n\wedge s}\left\|\left(\si(t,X_t,\L_{X_t})- \si(t,Y_t,\L_{Y_t})\right) \right\|^2_{\Lhs_2(\U;\BB)}\d t \\
	\leq& \frac{1}{2}\E\|Z_{\tau_n\wedge s}\|^2_{T,\BB}+
	c_2\E\int_0^{\tau_n\wedge s}\|b(t,X_t,\L_{X_t})-b(t,Y_t,\L_{Y_t})\|^2_{\BB}\d t  \\
	&
	+c_2\E\int_0^{\tau_n\wedge s}\left\|\left(\si(t,X_t,\L_{X_t})- \si(t,Y_t,\L_{Y_t})\right) \right\|^2_{\Lhs_2(\U;\BB)}\d t,\ \ s\in [0,T]. \end{split}
	\end{equation}
Since $\pi_t:=\L_{(X_t,Y_t)}\in{\C(\L_{X_t},\L_{Y_t})}$ and by the definition of $\W_{2,  \BB,n}$ in \eqref{local Wasserstein}, we have
\begin{equation}\label{ADD1}
\W_{2, \BB, n}(\L_{X_t},\L_{Y_t})^2\le \int_{\C_{T,\BB}\times\C_{T,\BB}}   \|\xi_{ t \wedge \tau_n^{\xi} \wedge\tau_n^{\eta} }-\eta_{ t \wedge \tau_n^{\xi} \wedge\tau_n^{\eta} }\|^2_{T,\BB}\pi(d\xi, d\eta)
=\E \|X_{\tau_n\wedge t}-Y_{\tau_n\wedge t}\|^2_{T,\BB}.
\end{equation}
So,   by \ref{C4},  we have
\begin{equation}\label{ADD2}
\begin{split}
&\E\int_0^{\tau_n\wedge s}\big\{\|b(t,X_t,\L_{X_t})-b(t,Y_t,\L_{Y_t})\|_{\BB}^2+ \left\|\left(\si(t,X_t,\L_{X_t})- \si(t,Y_t,\L_{Y_t})\right) \right\|^2_{\Lhs_2(\U;\BB)}\big\}\d t\\
&\le C_n\E\int_0^{\tau_n\wedge s}\left[\|X_t-Y_t\|_{T, \BB}^2+\W_{2, n, \BB}(\L_{X_t},\L_{Y_t})^2+C_0\e^{-C_n\ee}\right]\d t\\
&\le C_n\int_0^{s}\left[\E\|Z_{\tau_n\wedge s}\|_{T, \BB}^2+\W_{2, \BB, n}(\L_{X_t},\L_{Y_t})^2+C_0\e^{-C_n\ee}\right]\d t\\
&\le 2C_n\int_0^{s}\E\|Z_{\tau_n\wedge s}\|_{T, \BB}^2\d t+C_nC_0\e^{-C_n\ee},
\end{split}
\end{equation}
which together with \eqref{ADD} yields
\begin{equation}\label{A*D}
	\E\left[\|Z_{\tau_n\wedge s}\|^2_{T,\BB} \right]
	\leq C
	C_n\int_0^{s}\left\{\E\|Z_{\tau_n\wedge t}\|^2_{\BB}+ C_0 \e^{-\vv C_n} \right\} \d t,\ \ n\ge 1
	\end{equation} for some constant $C>0$.
	Applying Fatou's lemma  and  Gr\"{o}nwall's inequality, we derive
	$$\E \|Z_s\|_{T,\BB}^2\le \liminf_{n\to\infty} \E\left[\|Z_{\tau_n\wedge s}\|^2_{T,\BB} \right]\le  sCC_0 \liminf_{n\to\infty}C_n \e^{-C_n(\vv-Cs)}=0,\ \  T\ge s\in (0, \vv/C).$$
This implies the pathwise uniqueness up to time $t_0:= \{\vv/C\}\land T.$

\textbf{(b2)} If $t_0=T$, then the proof is finished. Otherwise, since $Z_{t_0}=0$, \eqref{A*D} implies
$$\E\left[\|Z_{\tau_n\wedge s}\|^2_{T,\BB} \right]
	\leq C
	C_n\int_{t_0}^{s} \E\|Z_{\tau_n\wedge t}\|^2_{\BB} \d t + sC_0 \e^{-\vv C_n},\ \ n\ge 1,\ s\in [t_0,T].  $$
Using Fatou's lemma  and  Gr\"{o}nwall's inequality as before, we arrive at
$$\E \|Z_s\|_{T,\BB}^2\le \liminf_{n\to\infty} \E\left[\|Z_{\tau_n\wedge s}\|^2_{T,\BB} \right]\le  sCC_0 \liminf_{n\to\infty}C_n \e^{-C_n(\vv-C(s-t_0))}=0,\ \    T\ge s\in (t_0, t_0+\vv/C).$$
Thus, the uniqueness holds up to time $(2t_0) \land T$. Repeating the procedure for finite many times, we prove the uniqueness up to time $T$.
\end{proof}

\section{Proof of Theorem \ref{T1} } \label{Section:Proof of T1}

\beg{proof}[Proof of \ref{Thm-Existence-weak} in Theorem \ref{T1}] 	
For each $n\ge 1$, let
$$b_{n}(t,\xi,\mu):= B_n(t, \xi(t)) + b(t,\xi_t,\mu_t),\ \ (t,\xi,\mu)\in [0,T]\times\C_{T,\H}\times\scr P_{T,\H}.$$  Obviously,  \ref{A1}--\ref{A3}  imply \ref{C1}--\ref{C3} for  $(b_n,\si)$ replacing $(b,\si)$.
Thus, by Proposition \ref{PR1},   there exists a  continuous adapted process $X^n(t)$ on $\H$ such that
\begin{equation}\label{P11}
\begin{split}
X^n(t)= X(0) &+ \int_0^t \left\{B_n(s,  X^n(s)) + b(s,X^n_s,\L_{X_s^n} )\right\}\d s\\
&+\int_0^t \si(s,X_s^n,\L_{X_s^n})\d W(s), \ \  t\in [0,T],
\end{split}
\end{equation}
and
\begin{equation} \label{P20}
\E\left[V(\|X^n_T\|_{T,\H}^2)\right]
\le \dd := 2K_1T +1+ \ff{64}{K_2}\left( K_1T+\E[V(\|X_0\|_\H^2)]\right),\ \ n\ge 1.
 \end{equation}
Consequently,
the stopping times
$$\tau_N^n:=\inf\{t\ge 0: \|X_t^n\|_{T,\H}^2\ge N\},\ \ n,N\ge 1$$ satisfy
\begin{equation}\label{P3'} \P(\tau_N^n<T) \le \ff {\dd} {V(N)},\ \ n,N\ge 1.\end{equation}
Next, similarly to \eqref{W2}, by \ref{A1}, the first inequality in \ref{A2}, \eqref{P11} and noting that $\|\cdot\|_\BB\le c\|\cdot\|_\H$ for some constant $c>0$, we find  a constant $C_N>0$ such that
\begin{equation}\label{W2'} \E \left[\sup_{s,t\in [0,T], |t-s|\le \vv} \|X^n(t\land \tau_N^n)- X^n(s\land \tau_N^n)\|_\BB\right]\le C_N \vv^{\ff 1 3},\ \ 0\le s\le t\le T,\ \vv\in (0,T).\end{equation}
		Now, combining \eqref{W2'}  with \eqref{P3'}, we arrive at
	\begin{align*} & \E \left[\sup_{s,t\le T, |s-t|\le \vv} (1\land \|X^n(s)-X^n(t)\|_{\BB})\right]\\
	&\le  \P(\tau^n_N\le T) +  \E \left[\sup_{s,t\le T\land \tau_N^n, |s-t|\le \vv} (1\land \|X^n(s)-X^n(t)\|_{\BB})\right]\\
	&\le \ff{\dd}{V(N)} + C_N  \vv^{\ff 1 3},\ \ n,  N\ge 1,\vv>0.\end{align*}
	Since $V(N)\uparrow \infty$ as $N\uparrow \infty$, we obtain
\begin{equation}\label{P00'} \E \left[\sup_{s,t\le T, |s-t|\le \vv} \left(1\land \|X^n(s)-X^n(t)\|_\BB\right)\right]\le \inf_{N>0}\left\{ \ff{\dd_{T,X(0)}}{V(N)}+ C_N \vv^{\ff 1 3}\right\}\downarrow 0\text{\ as\ }\vv\downarrow 0.\end{equation}
Due to this and \eqref{P20}, one can use  the Arzel\'a-Ascoli theorem for measures to find that $\{\mu^n:= \L_{X_T^n}\}_{n\ge 1}$ is tight in $\scr P_{T,\BB}$, so is $\{\LL^n:=\L_{(X_T^n,Y_T^n, W_T)}\}_{n\ge 1}$, where $W_T$ is a continuous process on a separable Hilbert space ${\tt{\mathbb U}}$ such that the embedding $\mathbb U\subset \tt{\mathbb U}$ is Hilbert-Schmidt, and
$$Y^n(t):=\int_0^t \si(s,X_s^n,\mu_s^n)\d W(s),\ \ t\in [0,T]$$
is a continuous process on $\BB$.
By the Prokhorov theorem, there exists a subsequence $\{n_k\}_{k\ge 1}$
such that   $ \mu^{(n_k)}\to \mu$    weakly  in $\scr P_{T,\BB}$,    and  $\LL^{n_k} \to \LL$ weakly in the probability space on $\scr P(\C_{T,\BB}^2\times \tt{\mathbb U})$. Then the Skorokhod theorem guarantees that there exists a complete filtration probability space
$(\tt\OO,\{\tt\F_t\}_{t\ge 0}, \tt\P)$ and a sequence $(\tt X_T^{n_k}, \tt Y_T^{n_k}, \tt W_T^{n_k})$ such that $\LL^{n_k}= \L_{(\tt X_T^{n_k},\tt Y_T^{n_k}, \tt W^{n_k}_T)|\tt\P}$ and
\beq\label{CCV}  \lim_{k\to\infty}\left(  \|\tt X_T^{n_k}-\tt X_T\|_{T,\BB}+\|\tt Y_T^{n_k}- \tt Y_T\|_{T,\BB} \right)=0
\end{equation} holds for some
continuous adapted process $(\tt X_T, \tt Y_T)$ on $\BB$.  Since the embedding $\H\hookrightarrow\BB$ is continuous, there exist    continuous maps $\pi_m: \BB\to \H,\ m\ge 1$ such that
$$ \|\pi_mx\|_\H\le \|x\|_\H,\ \  \lim_{m\to\infty} \|\pi_m x\|_\H = \|x\|_\H,\ \ x\in \BB,$$  where $\|x\|_\H:=\infty$ if $x\notin\H.$
Recalling  $\L_{\tt X_T^{n_k}|\tt\P}= \L_{X_T^{n_k}|\P}$, $\tt X_T^{n_k}\to \tt X_T$ in $\C_{T,\BB}$ as $k\to\infty$, \eqref{P20}  and  Fatou's lemma,   one has
\begin{equation}\label{QN}
\begin{split} &\tt \E \left[V(\|\tt X_T\|_{T,\H}^2) \right]
\leq
\tt \E\left[\lim_{m\to\infty} V(\|\pi_m \tt X_T\|_{T,\H}^2) \right]
\le \liminf_{m\to\infty}
\tt \E\left[V(\|\pi_m \tt X_T\|_{T,\H}^2) \right]\\
&= \liminf_{m\to\infty} \liminf_{k\to\infty}\tt \E\left[V(\|\pi_m \tt X_T^{n_k}\|_{T,\H}^2) \right]\le \dd <\infty.
\end{split}\end{equation}
Therefore we can infer from $\L_{\tt X_T^{n_k}|\tt\P}= \L_{X_T^{n_k}|\P}$, \eqref{P20} and \eqref{QN} that $\tt \P$-a.s.,
\begin{equation}\label{STP}
\tt\tau_N:= \inf\left\{t\ge 0: \sup_{k\ge 1} \|\tt X^{n_k} (t)\|_{\H}\ge N\right\}\uparrow \infty\ \text{as}\ N\uparrow\infty.
\end{equation}
Since $\tt Y^{n_k}_T$ is a continuous local martingale on $\BB$ with quadratic  variational process
$$\<\tt Y^{n_k}\>(t)=\int_0^{t} (\si^*\si)\left(s,\tt X_s^{n_k},\mu_s^{n_k}\right)\d s,\ \ t\in [0,T]. $$
We deduce from \eqref{P20},  \eqref{CCV}, \eqref{STP}  and   \ref{A1}  that  $\tt Y_T$ is a continuous local martingale on $\BB$ with quadratic variational process
$$\<\tt Y\>(t)= \int_0^t (\si^*\si)\left(s,\tt X_s,\L_{\tt X_s|\tt \P}\right)\d s,\ \ t\in [0,T].$$
By the martingale representation theorem,    there exists a cylindrical Brownian motion $\tt W(t)$ on $\mathbb U$ under $\tt\P$ such that
\beq\label{DCC} \tt Y(t)= \int_0^t \si\left(s,\tt X_s,\L_{\tt X_s|\tt \P}\right)\d \tt W(s),\ \ t\in [0,T].\end{equation}
 Moreover, it follows from \eqref{P11} and $ \L_{(\tt X_T^{n_k},\tt W^{n_k}_T)|\tt\P}=  \L_{(X_T^{n_k},W_T)|\P}$ that $\tt\P$-a.s.,
 \beg{align} \label{Xnk equ}
\tt X^{n_k}(t)= \tt X^{n_k}(0) +
\int_0^t \left\{B_{n_k}\left(s, \tt X^{n_k}(s)\right) + b\left(s,\tt X^{n_k}_s,\mu_s^{n_k}\right)\right\}\d s+ Y^{n_k}(t), \ \  t\in [0,T],k\ge 1.
\end{align}
 So, for any $N,k\ge 1$,
\beg{align*} \tt X^{n_k}(t\land\tt\tau_N)= \tt X^{n_k}(0) &+ \int_0^{t\land\tt\tau_N} \left\{B_{n_k}\left(s, \tt X^{n_k}(s)\right) + b\left(s,\tt X^{n_k}_s,\mu_s^{n_k}\right)\right\}\d s +Y^{n_k}(t \land\tt\tau_N), \ \  t\in [0,T].
\end{align*}
Summarizing this,  \ref{A1}, \ref{A2}, \eqref{P20}, \eqref{CCV} and \eqref{DCC}, and then letting $k\to\infty$, we derive
\begin{align}
\tensor[_\BB]{\left\<\tt X(t\land\tt\tau_N),\eta\right\>}{_{\BB^*}}
=&\tensor[_\BB]{\left\<\tt X(0),\eta\right\>}{_{\BB^*}}
+ \int_0^{t\land\tt\tau_N} \left\{
\tensor[_\BB]{\left\<B(s,  \tt X) + b\left(s,\tt X_s,\L_{\tt X_s|\tt\P}\right), \eta\right\>}{_{\BB^*}}\right\}\d s\notag\\
&+ \tensor[_\BB]{\left\<\int_0^{t\land\tt\tau_N}\si\left(s,\tt X_s,\L_{\tt X_s|\tt\P}\right) \d\tt W(s),\eta \right\>}{_{\BB^*}}, \ \ \eta\in \H_0.\label{WS}
\end{align}
 It is easy to see that \ref{A1}, \ref{A2}   and \eqref{QN}  imply  that  for some constant $\tt C_N>0$,
$$\sup_{s\in [0, T\land\tt\tau_N] }\|\si(s,\tt X_s,\L_{\tt X_s|\tt\P})\|_{\Lhs_2(\mathbb U;\H)}\le \tt C_N,$$  which means
 $\int_0^{t \land\tt\tau_N}\si(s,\tt X_s,\L_{\tt X_s|\tt\P})\d \tt W(s)$ is an adapted continuous process   on $\H\subset \BB$. Similarly, by \ref{A1}, \ref{A2} and \eqref{QN},
 \begin{equation}\label{B+b conti}
\int_0^{t\land\tt\tau_N} \{ B(s,  \tt X) + b(s,\tt X_s,\L_{\tt X_s|\tt\P}  ) \}\d s
 \end{equation}
 is a continuous process on $\BB$ as well.  On account of \eqref{QN} and \eqref{STP}, we identify that $(\tt X_T,\tt W_T)$ is a weak solution of \eqref{EE}.
 \end{proof}

\beg{proof}[Proof of \ref{Thm-Coninuous} in Theorem \ref{T1}] 	  Now, assume \ref{A4}.  We aim to prove the continuity of $\tt X(t)$ in $\H$. Since $X(t)$ is an adapted  continuous process on $\BB$, and   hence weak continuous in $\H$, it suffices to  prove the continuity of $[0,T]\ni t\mapsto \|\tt X(t)\|_\H$.
By \eqref{STP}, we only need to prove the continuity up to time $\tt\tau_N$ for each $N\ge 1$,  where $\tau_N$ is given in  \eqref{STP}.  If $\tt X\in\H$, then $B(t,\tt X)\in\BB$ and $\<B(t,\tt X),\tt X\>_\H$ does not make sense, therefore we can not use the  It\^{o} formula to $\|\tt X\|^2_{\H}$ directly. To overcome this difficulty, we consider $\|T_m\tt X\|^2_{\H}$ firstly, where $T_m$ is the operator as in \ref{A4}.  Applying $T_m$ to \eqref{EE} with noting  \ref{A4},  we see that
\begin{align} T_m \tt X(t\land\tt\tau_N) = &T_m(\tt X(0)) +\int_0^{t\land\tt\tau_N} T_m \left\{B(r,\tt X(r))+ b(r, \tt X_r, \L_{\tt X_r|\tt\P})\right\}\d r \notag\\
&+\int_0^{t\land\tt\tau_N} T_m \si(r, \tt X_r, \L_{\tt X_r|\tt\P})\d W(r),\ \ t\in [0,T]
\label{mollify Ito}
\end{align}
is an
$L^p$-semimartingale on $\H$ for any $p  \in [1,\infty)$.

%
%
%

Combining this with \ref{A1}, \ref{A4} and the   It\^o's formula, we find a constant $C_N>0$  such that
$$\tt\E\left[\left( \|T_m \tt X(t\land\tt\tau_N)\|_{\H}^2 - \|T_m \tt X(s\land\tt\tau_N)\|_{\H}^2\right)^{4}\right] \le C_N (t-s)^2,\ \ [s,t]\subset[0,T],\ t-s<1,\ m\ge 1.$$
Since $\|T_m x-x\|_\H\to 0$ as $m\to\infty$ holds for $x\in\H$ and $\tt X(t)$ takes values in $\H$,  Fatou's lemma   implies
$$\tt\E\left[\left( \|\tt X(t\land\tt\tau_N)\|_{\H}^2 -  \|\tt X(s\land\tt\tau_N)\|_{\H}^2\right)^{4}\right] \le C_N (t-s)^2,\ \ [s,t]\subset[0,T],\ t-s<1.$$
Therefore,
Kolmogorov's continuity theorem ensures the continuity of $t\mapsto \|\tt X(t\land\tt\tau_N)\|_\H$ as desired.\end{proof}

\beg{proof}[Proof of \ref{Thm-Unique} in Theorem \ref{T1}] 	By \ref{Thm-Existence-weak} in Theorem \ref{T1}, \eqref{EE} has a weak solution. Moreover, for any fixed $\mu\in \scr P_{T,\H}^w$,
it is easy to deduce from \ref{A1}, \ref{A2}, \ref{A3} and  \ref{A5} that the distribution independent SPDE
$$\d X^\mu(t)= \left\{B(t,X^\mu(t))+b(t, X_t^\mu, \mu_t)\right\}\d t +\si(t,X_t^\mu,\mu_t)\d W_t,\ \ X^\mu(0)=X_0$$ has a unique solution. So, by a  Yamada-Watanabe type principle, see for instance \cite[Lemma 3.4]{HW19} and \cite{K-14-ECP}, it remains to prove the pathwise uniqueness.

As is explained in step \textbf{(b2)} in the proof of Proposition \ref{PR1}, we assume that $C_N\to\infty $ as $N\to\infty$ and  it suffices to prove the pathwise uniqueness up to a time $t_0>0$ independent of the initial value $X(0)$.   Let $\tau_n$ be defined by \eqref{X Y tau}.  As is shown in \textbf{(b1)} in the proof of Proposition \ref{PR1},   it follows from \ref{A5},   It\^o's formula and BDG inequality that there is a constant $K_0>1$   such that
	 $$\E\left[\|Z_{\tau_n\wedge s}\|^2_{T,\BB}\right] \le     K_0C_n
	 \int_0^{s}\left( \E\left[\|Z_{\tau_n\wedge r}\|^2_{T,\BB}\right]  + \e^{-\vv C_n}\right)\d r,\ \ s\in [0,T], n\ge 1.$$
	By Fatou's lemma and Gr\"onwall's inequality, this implies
$$\E\left[ \|Z_s\|^2_{T,\BB}\right] \le \liminf_{n\to\infty} \E\left[\|Z_{\tau_n\wedge s}\|^2_{T,\BB}\right]\le \liminf_{n\to\infty} sK_0\e^{ K_0C_ns-\vv C_n} =0$$
provided $s<t_0:= \vv/K_0.$ Therefore pathwise uniqueness holds up to time $t_0$, and hence the proof is finished.  \end{proof}

\section{Proof of Theorem \ref{T-TE}} \label{Section:Proof of T2}

 It suffices to verify conditions in Theorem \ref{T1} for suitable choices of $\H,\BB, B_n$, $J_n$ and $T_n$.
Let $j(x)$ be a Schwartz function such that $0\leq\widehat{j}(\xi)\leq1$ for all the $\xi\in \R^d$ and $\widehat{j}(\xi)=1$ for any $|\xi|\leq1$.
For any $n\ge 1$ and $f\in H^0:=L^2(\mathbb T^d\to\R^d;\mu)$,  we define
 \begin{align}\label{JN}
 J_n f:=j_n*f,\ \ j_{n}(x)=\frac{1}{2\pi}\sum_{k\in{\Z}^d}\widehat{j}\left(k/n\right)\e^{{\rm i}\<k,\cdot\>},
 \end{align}
and
 \begin{align}\label{TN}
 T_n f:=(I-n^{-2} \Delta)^{-1}f= \sum_{k\in\Z^d}\left( 1+n^{-2} |k|^2\right)^{-1}  \widehat{f}(k)\, \e^{{\rm i}\<k,\cdot\>}.
\end{align}

Obviously, for any $s\ge 0$,
\begin{align}
D^sJ_{n}&=J_{n}D^s,\ \ D^s T_n= T_n D^s, \label{Jn property 3}\\
\<J_{n}f, g\>_{H^s}&=\<f, J_{n}g\>_{H^s},\ \ \<T_nf,g\>_{H^s} = \<f, T_ng\>_{H^s}, \ \ f,g\in  H^s, \label{Jn property 4}\\
\|J_{n}f\|_{H^s}\lor \|T_{n}f\|_{H^s}&\leq \|f\|_{H^s},\ \|\nn J_nf\|_{H^s}\lor \|\nn T_nf\|_{H^s}\lesssim n\|f\|_{H^s},\ \ n\ge 1, f\in H^s,\label{Jn property 5}
\end{align}
 where  for two sequences of positive numbers $\{a_n,b_n\}_{n\ge 1}$,    $a_n \lesssim b_n$ means that
$a_n\le c b_n$ holds for some constant $c>0$ and all $n\ge 1.$ Moreover,  we write $a_n={\rm o}(b_n)$ if $\lim_{n\to\infty } b_n^{-1} a_n =0.$  Then
\begin{equation}
\|X-J_{n}X\|_{H^r}={\rm o } ( n^{r-s}),\ \ 0\le  r\leq s, X\in H^s,\label{Jn property 1}
\end{equation}
  and  for any $r\ge s$,
\begin{equation}
\|J_{n}X\|_{H^{r}}\lesssim  n^{r-s}\|X\|_{H^{s}} \
 \text{uniformly\ in}\  X\in H^s.  \label{Jn property 2}
\end{equation}
To verify conditions in Theorem \ref{T1},
we need more properties of $J_n$, $T_n$ and $D^s$.  In general, the  commutator  for two operators $P,Q$ is given by
  $$[P,Q]:= PQ-QP.$$

\begin{lem}\label{Je commutator}  There exists a constant $C>0$ such that  	\begin{align*}
	\|[T_{n}, (g\cdot \nabla)]f\|_{L^2(\mu)}
	\leq C\|\nabla g\|_{\infty}\|f\|_{L^2(\mu)},\ \ n\ge 1, f\in L^2(\mathbb T^d\to\R^d;\mu),\  g\in W^{1,\infty}(\mathbb{T}^d\to\R^d;\mu).
	\end{align*}
\end{lem}
\begin{proof}
	
	It is worth noticing that in 1-D case, the above commutator
	estimate has been established for a different mollifier on
	the whole space, see \cite{HK-09-DIE}. In current setting, periodicity is required and the mollifier is different, so we present also the proof here. 	
	
	 Let $\pp_l$ denote the $l$-th partial derivative in $\R^d$. Since $[T_n,\partial_l]=0$ for $l\in\{1,2,\cdots,d\}$, we have
	\begin{align*}
	&\|[T_{n}, (g\cdot \nabla)]f\|^2_{L^2(\mu)}
	=\sum_{j=1}^d
	\Big\|\sum_{l=1}^dT_{n}\left(g_l\partial_{l}f_j\right)
	-\sum_{l=1}^dg_l\partial_{l}\left(T_{n}f_j\right)\Big\|^2_{L^2(\mu)}\\
	\leq& d\sum_{j,l=1}^d \big\|T_{n}\left(g_l\partial_{l}f_j\right)
	-g_lT_{n}\left(\partial_{l}f_j\right)\big\|^2_{L^2(\mu)}
	=d\sum_{j,l=1}^d \Big\|[T_{n},g_l]\partial_{l}f_j\Big\|^2_{L^2(\mu)}.
	\end{align*}
	 Hence,
	it suffices to find a constant $c>0$ such that 	\begin{equation}\label{Te commutator xl}
	\|[T_{n}, g]\partial_{l}f\|^2_{L^2(\mu)}\leq c\|\nn g\|^2_{L^{\infty}}\|f\|^2_{L^2(\mu)}, \ \ f,g\in C^1(\mathbb T^d), 1\le l\le d, n\ge 1.
	\end{equation}
Noting that
	 $$ \ff{1}{1+\frac{1}{n^2} |k|^2}- \ff{1}{1+\frac{1}{n^2} |m|^2} =\ff{\<m-k,m+k\>}{n^2 (1+\frac{1}{n^2} |k|^2)(1+\frac{1}{n^2} |m|^2)}
	 = \sum_{j=1}^d \ff{(m-k)_j (m_j+k_j)}{n^2 (1+\frac{1}{n^2} |k|^2)(1+\frac{1}{n^2} |m|^2)},$$
	 by  $T_n=(I-\ff 1 n \DD)^{-1}$,     \eqref{X0},  and \eqref{XXX},  we find a constant $c>0$ such that
	\begin{align*}
	&\|[T_{n}, g]\partial_{l}f\|^2_{L^2(\mu)}=\|T_n (g\pp_l f)- g T_n(\pp_l f)\|_{L^2(\mu)}^2
	 =\sum_{k\in\Z^d} \left|(1+n^{-2}|k|^2)^{-1}\widehat{g\pp_l f}(k)
	-\widehat{gT_n\pp_l f} (k)\right|^2 \\
	=&\sum_{k\in\Z^d}
	\left|\left( \frac{m_l}{1+\frac{1}{n^2} |k|^2}- \ff{m_l} {1+\frac{1}{n^2} |m|^2}\right) \sum_{m\in\Z^d} \widehat{g}(k-m)\widehat{f }(m) \right|^2\\
	=&\sum_{k\in\Z^d}
	\bigg|\sum_{j=1}^d  \sum_{m\in\Z^d} \widehat{\pp_jg}(k-m)
	\left\{\ff{\F(T_n\pp_l\pp_j f)(m) }{n^2(1+\frac{1}{n^2}|k|^2) }+ \ff{{\rm i}k_j \F(T_n \pp_lf)(m)}{n^2(1+\ff 1 n|k|^2)}\right\}
	  \bigg|^2\\
	  = & \sum_{k\in\mathbb Z^d} \left|\sum_{j=1}^d\left\{ \ff{\F\left( (\pp_jg)T_n\pp_l\pp_jf\right)(k) }{n^2(1+\frac{1}{n^2}|k|^2)} + \ff{{\rm i}k_j \F\left( (\pp_j g) T_n \pp_l f\right)(k)}{n^2(1+\frac{1}{n^2}|k|^2)}\right\}\right|^2\\
	\leq& 2d \sum_{j=1}^d \left\{\frac{1}{n^4} \big\|(\pp_j g) T_n \pp_l\pp_j f\big\|_{L^2(\mu)}^2 + \frac{1}{n^2} \big\|(\pp_j g) T_n\pp_l f\big\|_{L^2(\mu)}^2\right\}
	\le   c \|\nn g\|_\infty^2 \|f\|_{L^2(\mu)}^2,  	\end{align*}
	where the last step is due to the fact that
	\begin{align*}
	&\frac{1}{n^4}\big\|  T_n \pp_l\pp_j f\big\|_{L^2(\mu)}^2
	+ \frac{1}{n^2} \big\|  T_n\pp_l f\big\|_{L^2(\mu)}^2
	\le C\|f\|_{L^2(\mu)}^2,\ \ n\ge 1\end{align*}  holds for some constant $C>0$.
    Then we obtain \eqref{Te commutator xl} and hence finish the proof.
\end{proof}

We also need the following lemma on the commutator estimates for $D^s$.

\begin{lem}[\cite{KPV-1991-JAMS}]\label{KP commutator estimate}
	Let  $p,p_2,p_3 \in(1,\infty)$ and $p_1,p_4\in (1,\infty]$   such that
	$$\frac{1}{p}=\frac{1}{p_1}+\frac{1}{p_2}=\frac{1}{p_3}+\frac{1}{p_4}.$$ Then for any $s>0$, there  exists a constant $C>0$ such that
	$$
	\|\left[D^s,f\right]g\|_{L^p(\mu)}\leq C(\|\nabla f\|_{L^{p_1}(\mu)}\|D^{s-1}g\|_{L^{p_2}(\mu)}+\|D^sf\|_{L^{p_3}(\mu)}\|g\|_{L^{p_4}(\mu)})$$ holds for all $ f,g\in H^s\cap W^{1,\infty}(\mathbb T^d\to\R^d;\mu).$
	
\end{lem}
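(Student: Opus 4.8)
The stated estimate is the Kenig--Ponce--Vega form of the Kato--Ponce commutator estimate, which on $\R^d$ is precisely \cite{KPV-1991-JAMS}; the plan is therefore either to transfer that $\R^d$ result to $\mathbb T^d$ by a de~Leeuw type transference for bilinear Fourier multipliers, or --- the route I would write out --- to rerun the argument intrinsically on the torus through Bony's paraproduct calculus (equivalently, the Coifman--Meyer bilinear multiplier theorem). Working componentwise reduces everything to scalar $f,g\in H^s\cap W^{1,\infty}(\mathbb T^d)$. First I would use $[D^s,f]g=D^s(fg)-fD^sg$ and \eqref{XXX} to obtain, for $\zeta\in\Z^d$,
\[
\widehat{[D^s,f]g}(\zeta)=\sum_{k+m=\zeta}\Theta(k,m)\,\widehat f(k)\,\widehat g(m),\qquad \Theta(\xi,\eta):=(1+|\xi+\eta|^2)^{\ff s2}-(1+|\eta|^2)^{\ff s2},
\]
where $k$ is the frequency of $f$ and $m$ that of $g$, and then fix a smooth partition of unity $1=\chi_A(\xi,\eta)+\chi_B(\xi,\eta)$ on $\R^d_\xi\times\R^d_\eta$ by functions homogeneous of degree $0$ away from the origin, with $\chi_A$ supported in $\{2|\xi|\le 3|\eta|\}$ and $\equiv1$ on $\{|\xi|\le|\eta|\}$. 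Splitting $\Theta=\Theta\chi_A+\Theta\chi_B$, the two cones correspond respectively to ``$g$ of higher frequency'' and ``$f$ of higher frequency''.

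For the $\chi_A$ part I would use that $|\eta|$ dominates and $|\eta+t\xi|\sim|\eta|$ for $t\in[0,1]$ on $\supp\chi_A$: a first order Taylor expansion of $\xi\mapsto(1+|\xi|^2)^{s/2}$ along the segment $[\eta,\eta+\xi]$ gives
\[
\Theta(\xi,\eta)\chi_A(\xi,\eta)=\sum_{j=1}^d\xi_j\,(1+|\eta|^2)^{\ff{s-1}2}\,a_j(\xi,\eta),\qquad a_j:=s\,\chi_A\,(1+|\eta|^2)^{\ff{1-s}2}\int_0^1(\eta+t\xi)_j\big(1+|\eta+t\xi|^2\big)^{\ff s2-1}\d t,
\]
and I would then check that each $a_j$ lies in the Coifman--Meyer class, i.e. $|\pp_\xi^\alpha\pp_\eta^\bb a_j(\xi,\eta)|\le C_{\alpha,\bb}(|\xi|+|\eta|)^{-|\alpha|-|\bb|}$, uniformly (the locus $\xi=0$ is harmless since $\Theta=0$ there). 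Consequently the $\chi_A$ part of the commutator equals, up to fixed constants, $\sum_{j=1}^d\mathcal B_{a_j}(\pp_jf,\,D^{s-1}g)$, where $\mathcal B_{a_j}$ is the bilinear Fourier multiplier operator with symbol $a_j$; by the Coifman--Meyer theorem (which maps $L^{p_1}\times L^{p_2}\to L^p$ whenever $p_1\in(1,\infty]$, $p_2,p\in(1,\infty)$ and $\ff1p=\ff1{p_1}+\ff1{p_2}$) its $L^p(\mu)$ norm is $\lesssim\|\nn f\|_{L^{p_1}(\mu)}\|D^{s-1}g\|_{L^{p_2}(\mu)}$, with the endpoint $p_1=\infty$ included. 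In Littlewood--Paley language this is exactly the low--high paracommutator $\sum_j\big(D^s(S_{j-2}f\cdot P_jg)-S_{j-2}f\cdot D^sP_jg\big)$, summed by means of the square function and the Hardy--Littlewood maximal function.

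For the $\chi_B$ part, on $\supp\chi_B$ one has $|\eta|\lesssim|\xi|$, hence $|\xi+\eta|\lesssim|\xi|$, so that for $s>0$ both $(1+|\xi+\eta|^2)^{s/2}$ and $(1+|\eta|^2)^{s/2}$ are $\lesssim(1+|\xi|^2)^{s/2}$; writing
\[
\Theta(\xi,\eta)\chi_B(\xi,\eta)=(1+|\xi|^2)^{\ff s2}\,b(\xi,\eta),\qquad b:=\chi_B\,(1+|\xi|^2)^{-\ff s2}\big((1+|\xi+\eta|^2)^{\ff s2}-(1+|\eta|^2)^{\ff s2}\big),
\]
I would verify that $b$ is again a Coifman--Meyer symbol --- its boundedness being the one point where the hypothesis $s>0$ is actually used, the derivative bounds being routine. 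Then the $\chi_B$ part equals $\mathcal B_b(D^sf,\,g)$ and has $L^p(\mu)$ norm $\lesssim\|D^sf\|_{L^{p_3}(\mu)}\|g\|_{L^{p_4}(\mu)}$, again allowing $p_4=\infty$ (in paraproduct terms this is the high--low paraproduct $T_g(D^sf)$ plus the resonant term). Adding the $\chi_A$ and $\chi_B$ contributions yields the estimate. I expect the only genuine work to be the symbol bookkeeping --- verifying the Coifman--Meyer / Mikhlin--H\"ormander type bounds for $a_j$ and $b$ on their respective cones uniformly, and tracking the endpoint cases $p_1,p_4=\infty$ --- while the analytic inputs (the Coifman--Meyer bilinear multiplier theorem, Littlewood--Paley theory, the maximal function) are used as black boxes available on $\mathbb T^d$ either directly or by transference from $\R^d$, so that neither the periodicity nor the $\R^d$--valued setting adds any new difficulty; in practice one may of course simply quote \cite{KPV-1991-JAMS} together with a transference argument.
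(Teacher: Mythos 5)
The paper gives no proof of this lemma; it is stated as a quoted result from \cite{KPV-1991-JAMS} and then used as a black box, with no comment on passing from $\R^d$ (the setting of that reference) to the torus. So there is no paper proof to compare against, and your sketch is not so much a different route as it is the contents of the black box. The outline is correct and is essentially the mechanism of the original KPV argument: the bilinear symbol for $[D^s,f]g$ is indeed $\Theta(\xi,\eta)=(1+|\xi+\eta|^2)^{s/2}-(1+|\eta|^2)^{s/2}$, the conical cut-off into the low--high region $\chi_A$ and its complement $\chi_B$ is the standard paraproduct split, the first-order Taylor expansion on $\chi_A$ correctly factors out $\xi_j$ and $(1+|\eta|^2)^{(s-1)/2}$ so that the operator acts on $(\nn f, D^{s-1}g)$, and on $\chi_B$ factoring out $(1+|\xi|^2)^{s/2}$ uses $s>0$ to keep the remaining symbol bounded. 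The two things you defer as ``bookkeeping'' are genuinely the only places requiring care, and both close: (i) the endpoints $p_1=\infty$ or $p_4=\infty$ are not in the classical two-exponent statement of Coifman--Meyer but do follow from bilinear Calder\'on--Zygmund theory for CM-class symbols (the $L^\infty$ input always occupies the low-frequency slot of the paraproduct, and the resonant piece inside $\chi_B$ is still a CM symbol, so no separate argument is needed for it); and (ii) on $\mathbb T^d$ one either reruns the Littlewood--Paley argument intrinsically or uses de~Leeuw-type transference for bilinear multipliers, which applies because your $a_j$ and $b$ are restrictions to $\Z^d\times\Z^d$ of global CM symbols on $\R^d\times\R^d$. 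One further small point worth saying explicitly, since the lemma is stated for $\R^d$-valued $f,g$: in the paper the commutator is always applied in the form $[D^s, f\cdot\nn]g=\sum_l[D^s,f_l]\pp_l g$, so the reduction to scalar $[D^s,a]b$ that you invoke at the outset is exactly how the lemma is used.
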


We are now ready to prove   Theorem \ref{T-TE}.
Let $s,s'$ be given in Assumption \ref{Assum-B}.
Take $\H=H^s$,
$\BB= H^{s'}$, $\H_0=C^\infty(\mathbb T^d;\R^d)$, and let  $J_n$ and $T_n$ be given in \eqref{JN} and \eqref{TN}, respectively. Take
\beg{equation}\label{BN}
B(t,X)=B(X)= -(X\cdot\nn)X,\ \
B_n(t,X)= B_n(X)= J_n B(J_n X),\ \ t\ge 0,\ X\in H^s.
\end{equation}
Obviously, \ref{A1} follows from \ref{B1}.
So, it remains  to verify  \ref{A2}, \ref{A3}, \ref{A4} and \ref{A5}.

\beg{proof}[Proof of  \ref{A2}]  By   \eqref{Jn property 5}, we have
$$\|B_n(t,X)\|_{H^s}\leq \|(J_{n}X\cdot\nabla)J_{n}X\|_{H^s}
\leq \|J_{n}X\|_{H^s}\|\nabla J_{n}X\|_{H^s}\leq n\,\|X\|^2_{H^s},$$
and
\begin{align*}
\|B_n(t,X)-B_n(t,Y)\|_{H^s}\le&\|(J_{n}X\cdot\nabla)J_{n}X-(J_{n}Y\cdot\nabla)J_{n}Y\|_{H^s}\\
\le&\|X\|_{H^s}\|\nabla(J_{n}X-J_{n}Y)\|_{H^s}+\|X-Y\|_{H^s}\|\nabla J_{n}Y\|_{H^s}\\
\lesssim& n\,(\|X\|_{H^s}+\|Y\|_{H^s}) \|X-Y\|_{H^s}.
\end{align*}
  Finally, by identifying $H^{s'}$ and $(H^{s'})^*$ via the Riesz isomorphism, then \ref{A2} follows from the above estimates and \eqref{Jn property 1}. \end{proof}

\begin{proof}[Proof of \ref{A3}]
It follows from Lemma \ref{KP commutator estimate}, integration by parts, $H^{s-1}\hookrightarrow W^{1,\infty}$, \eqref{Jn property 3} and \eqref{Jn property 5} that for some $C=C_s>0$,
\begin{align}
\left|\left\<B_n(X),X\right\>_{H^s}\right|
 \leq&\left|\left\<\left[D^s,
	(J_nX\cdot\nabla)J_nX\right],D^sJ_n X\right\>_{L^2(\mu)}\right|+
	\left|\left\<(J_nX\cdot\nabla)D^sJ_nX,D^sJ_n X\right\>_{L^2(\mu)}\right|\notag\\
	\leq& C_s \|J_nX\|_{H^s}\|\nabla J_nX\|_{\infty}\|J_n X\|_{H^s}
	+\|\nabla J_nX\|_{\infty}\|J_n X\|^2_{H^s}\notag\\
	\leq&  (C_s+1)\|X\|_{H^{s-1}}\|X\|^2_{H^s},\ \ X\in\H:=H^s. \label{DSP}
\end{align}
Then above estimate and \ref{B2} yields \ref{A3}.
\end{proof}

\beg{proof}[Proof of \ref{A4}]
Let  $T_n$ be  defined in  \eqref{TN}. It is easy to see that \eqref{A4-1} is satisfied. So, to verify \ref{A4} it remains to check \eqref{A4-2}.  By \eqref{Jn property 3}, \eqref{Jn property 4},  \eqref{Jn property 5},   Lemma \ref{KP commutator estimate}, integration by parts, Lemma \ref{Je commutator},   and $H^{s}\hookrightarrow W^{1,\infty}$, we find constants $c_1,c_2,c_3>0$ such that
\begin{equation*}
\begin{split}
&\left|\left\<T_n\{(X\cdot\nabla)X\},T_n X\right\>_{H^s}\right| \\
=&\left|\left\<\left[D^s,
(X\cdot\nabla)X\right],D^sT^2_n X\right\>_{L^2(\mu)}+
\left\<T_n\{(X\cdot\nabla)D^sX\},D^sT_n X\right\>_{L^2(\mu)}\right| \\
\leq&\left|\left\<\left[D^s,
(X\cdot\nabla)X\right],D^sT^2_n X\right\>_{L^2(\mu)}\right|+
\left|\left\<[T_n,(X\cdot\nabla)]D^sX,D^sT_n X\right\>_{L^2(\mu)}\right| \\
&\qquad
+\left|\left\<(X\cdot\nabla)D^sT_n X,D^sT_n X\right\>_{L^2(\mu)}\right| \\
\leq& c_1 \|X\|_{H^s}\|\nabla X\|_{\infty}\|T^2_n X\|_{H^s}
+c_2\|\nabla X\|_{\infty}\|X\|_{H^s}\|T_n X\|_{H^s} \\
\le & c_3 \|X\|^3_{H^s},\ \ X\in H^s=\H.
\end{split}\end{equation*}

Therefore,   \eqref{A4-2} holds.
\end{proof}

\beg{proof}[Proof of \ref{A5}] By \ref{B3}, for any $N\ge 1$  it suffices to find a constant $C_N>0$ such that
\beg{equation*} \left\<B(t,X)-B(t,Y),X-Y\right\>_{H^{s'}}
\leq C_N\|X-Y\|^2_{H^{s'}},\ \ X,Y\in \C_{T,H^s,N}.\end{equation*}
Let  $Z=X-Y$. By   $H^s\hookrightarrow H^{s'} \hookrightarrow W^{1,\infty}$ and Lemma \ref{KP commutator estimate}, we find  constants $c_1,c_2>0$ such that
\begin{align*}
&\left\<B(t,X)-B(t,Y),X-Y\right\>_{H^{s'}}\\
=&-\left\<(Z\cdot\nn)X,Z\right\>_{H^{s'}}
-\left\<(Y\cdot\nn)Z,Z\right\>_{H^{s'}}\\
\leq& c_1\|X\|_{H^s}\|Z\|_{H^{s'}}^2
+\left|\left\<D^{s'}\left((Y\cdot\nn)Z\right),D^{s'}Z\right\>_{L^2(\mu)}\right|\\
\le &c_1\|X\|_{H^s}\|Z\|_{H^{s'}}^2+c_2\|D^{s'}Y\|_{L^{2}(\mu)}\|\nabla Z\|_{L^\infty(\mu)}\|Z\|_{H^{s'}}
+c_2\|\nabla Y\|_{\infty}\|Z\|^2_{H^{s'}}\\
\le &c_1\|X\|_{H^s}\|Z\|_{H^{s'}}^2+c_2\|Y\|_{H^{s}}\|Z\|^2_{H^{s'}},
\end{align*}
which is the desired estimate.
 \end{proof}

\section*{Acknowledgement}
We would like to thank Mr. Wei Hong for careful check and corrections.

\end{document}